\theoremstyle{plain}
\newtheorem{theorem}{Theorem}[section]
\newtheorem{lemma}[theorem]{Lemma}
\newtheorem{remark}[theorem]{Remark}
\theoremstyle{definition}
\newtheorem{definition}[theorem]{Definition}
\newtheoremstyle{TheoremNum}
{\topsep}{\topsep}              
{\itshape}                      
{}                              
{\bfseries}                     
{.}                             
{ }                             
{\thmname{#1}\thmnote{ \bfseries #3}}
\newcommand{\F}{\mathbb F}
\newcommand{\fqn}{\mathbb{F}_{q^n}}
\newcommand{\fqk}{\mathbb{F}_{q^k}}
\newcommand{\fq}{\mathbb{F}_{q}}
\newcommand{\rank}{\mathrm{rank}}
\newcommand{\cS}{\mathcal S}
\newcommand{\cC}{\mathcal C}
\newcommand{\sS}{\mathscr S}
\newcommand{\Aut}{\mathrm{Aut}}
\newcommand{\GL}{\mathrm{GL}}
\newcommand{\Gal}{\mathrm{Gal}}
\newcommand{\Tr}{ \ensuremath{ \mathrm{Tr}}}
\newcommand{\cT}{\mathcal{T}}
\newcommand{\RN}[1]{%
	\textup{\uppercase\expandafter{\romannumeral#1}}%
}
\def\zhou#1 {\fbox {\footnote {\ }}\ \footnotetext { From Yue: {\color{blue}#1}}}
\def\chen#1 {\fbox {\footnote {\ }}\ \footnotetext { From Tang: {\color{red}#1}}}
\begin{document}
	\title[New maximum linear symmetric rank-distance codes]{A new family of maximum linear symmetric rank-distance codes}
	\author[W. Tang]{Wei Tang\textsuperscript{\,1}}
	\author[Y. Zhou]{Yue Zhou\textsuperscript{\,2}}
	\address{\textsuperscript{1}Purple Mountain Laborotories, 211111 Nanjing, China}
	\address{\textsuperscript{2}Department of Mathematics, National University of Defense Technology, 410073 Changsha, China}
	\email{yue.zhou.ovgu@gmail.com}
	\keywords{Symmetric bilinear form; Rank metric code; Linearized polynomial}
	\begin{abstract}
		Let $\mathscr{S}_n(q)$ denote the set of symmetric bilinear forms over an $n$-dimensional $\fq$-vector space. A subset $\mathcal{C}$ of $\mathscr{S}_n(q)$ is called a $d$-code if the rank of $A-B$ is larger than or equal to $d$ for any distinct $A$ and $B$ in $\mathcal{C}$. If $\mathcal{C}$ is further closed under matrix addition, then $|\mathcal{C}|$ is sharply upper bounded by $q^{n(n-d+2)/2}$ if $n-d$ is even and $q^{(n+1)(n-d+1)/2}$ if $n-d$ is odd. Additive codes meeting these upper bounds are called maximum. There are very few known constructions of them. In this paper, we obtain a new family of maximum $\fq$-linear $(n-2)$-codes in $\mathscr{S}_n(q)$ for $n=6,8$ and $10$ which are not equivalent to any known constructions. Furthermore, we completely determine the equivalence between distinct members in this new family.
	\end{abstract}
	\maketitle
	
	\section{Introduction}\label{sec:intro}
	Let $q$ be a prime power.  For any two matrices $A, B$ of the same size over $\F_q$, their \emph{rank-distance} is defined as 
	\[d_r(A,B)=\rank(A-B).\]
	
	Let $V=V(n,q)$ be an $n$-dimensional $\fq$-vector space. Let $\sS_n(q)$ denote the set of symmetric bilinear forms on $V$.
	For any given integer $d$ satisfying $1\leq d\leq n$, a subset $\cC \subseteq \sS_n(q)$ is called a \emph{$d$-code} if
	$d_r(A, B)\geq d$ for all different $A, B \in \cC$, where we implicitly assume that set $\cC$ contains at least two elements. If a $d$–code $\cC \subseteq \sS_n(q)$ forms a subgroup of $\sS_n(q)$ with respect to the addition of matrices, then $\cC$ is called \emph{additive}.

	The following theorem was established in \cite{Schmidt_2015} for odd $q$, in \cite{Schmidt_2010} for the case where $q$ is even and $d$ is odd, and in \cite{Schmidt_2020} when both $q$ and $d$ are even.
	\begin{theorem}\label{the1.2}
		Let $\cC$ be a $d$-code in $\sS_n(q)$. When $d$ is even, $\cC$ is further required to be additive. Then we have following tight upper bound on the size of $\cC$,
		\begin{equation}\label{eq:bound}
			\mid\cC\mid\leq	
			\begin{cases}
				q^{n(n-d+2)/2}, & \text{ if }n-d\text{ is even};\\
				q^{(n+1)(n-d+1)/2}, & \text{ if }n-d\text{ is odd}.
			\end{cases}
		\end{equation}
	\end{theorem}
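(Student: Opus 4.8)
The natural setting is Delsarte's theory of the translation association scheme carried by $\sS_n(q)$, whose classes are indexed by the rank of the difference of two forms and which is self-dual under the nondegenerate pairing $\langle A,B\rangle=\Tr(AB)$. The plan is to bound $|\cC|$ by the linear-programming method, of which the code--anticode inequality is the simplest instance. For the latter, call $\mathcal A\subseteq\sS_n(q)$ an \emph{anticode of diameter $e$} if $\rank(A-B)\le e$ for all $A,B\in\mathcal A$. If $\cC$ is a $d$-code and $e\le d-1$, then the addition map $\cC\times\mathcal A\to\sS_n(q)$ is injective: an equality $C+A=C'+A'$ would force $\rank(C-C')=\rank(A'-A)$ to be both $\ge d$ and $\le d-1$. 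Hence
\[
|\cC|\le q^{n(n+1)/2}\big/|\mathcal A|,
\]
so it would suffice to construct an anticode of diameter $d-1$ of size $q^{n(d-1)/2}$ when $n-d$ is even, and of size $q^{(n+1)(d-1)/2}$ when $n-d$ is odd.

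First I would test the elementary bounded-rank subspaces: the ``banded'' forms
\[
\left\{\begin{pmatrix} P & Q\\ Q^{\mathsf T} & 0\end{pmatrix}: P^{\mathsf T}=P\in\sS_k(q)\right\},
\]
whose rank is at most $2k$, together with the forms having a fixed subspace in their radical, whose rank is bounded by its codimension, optimising the block sizes against the two parity targets. These elementary subspaces appear to fall just short of the required dimension in general (already a diameter-$4$ anticode in $\sS_7(q)$ seems to reach only $q^{13}$ against a target of $q^{14}$), so the code--anticode inequality alone is unlikely to be tight. I would therefore pass to the full linear-programming bound: writing the inner distribution $(a_0,\dots,a_n)$ of $\cC$, supported on $\{0\}\cup\{d,\dots,n\}$, and its dual $a_j^{\perp}=|\cC|^{-1}\sum_i a_i Q_i(j)\ge 0$, I would use the explicit eigenvalues of the scheme---generalised ($q$-ary) Krawtchouk polynomials whose vanishing pattern depends on the parity of $n-d$---to select a feasible polynomial forcing $|\cC|=\sum_i a_i$ down to the stated exponent.

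The main obstacle is twofold. The parity split in the bound mirrors a genuine dichotomy in the zeros of these Krawtchouk polynomials, and extracting the exact exponent in each case is the delicate combinatorial heart of the argument. More fundamentally, for even $d$ the linear-programming relaxation for arbitrary codes does not reach the claimed value, which is precisely why additivity is imposed: when $\cC$ is a subgroup I would invoke the MacWilliams-type identities relating the rank distributions of $\cC$ and of its dual $\cC^{\perp}=\{B:\Tr(AB)=0 \text{ for all } A\in\cC\}$, whose nonnegativity and support supply the extra constraints needed to close the gap. Finally, the case of even $q$ must be handled separately, since there the symmetric forms with vanishing diagonal are alternating, the classification of forms by rank and type changes, and with it the eigenvalues of the scheme; this is why one expects the result to be assembled from distinct analyses according to the parities of $q$ and $d$.
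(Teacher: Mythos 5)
The paper itself does not prove this statement; it quotes it from Schmidt's papers (\cite{Schmidt_2015} for odd $q$, \cite{Schmidt_2010} and \cite{Schmidt_2020} for even $q$), and your proposal correctly points at the machinery those papers use: Delsarte theory for the translation scheme on $\sS_n(q)$. But what you have written is a plan, not a proof. The code--anticode step is fine as far as it goes (the injectivity argument and the two parity targets $q^{n(d-1)/2}$ and $q^{(n+1)(d-1)/2}$ are computed correctly, and your observation that the banded anticodes fall short, e.g.\ $q^{13}$ versus $q^{14}$ in $\sS_7(q)$, is accurate), but you then abandon it, and the replacement --- ``select a feasible polynomial forcing $|\cC|$ down to the stated exponent'' --- is never exhibited. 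Producing the explicit eigenvalues of the scheme, the dual-feasible certificate whose value is exactly $q^{n(n-d+2)/2}$, resp.\ $q^{(n+1)(n-d+1)/2}$, and, for even $d$, the MacWilliams-identity argument relating the inner distributions of the additive code $\cC$ and its dual $\cC^{\perp}$, is the entire content of the theorem. Deferring these as ``the delicate combinatorial heart'' and ``the extra constraints needed to close the gap'' leaves holes precisely where the proof has to live; indeed the even-$d$, even-$q$ case resisted until \cite{Schmidt_2020}, so it cannot be waved through.

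One assertion is wrong rather than merely missing: the opening claim that the scheme is self-dual under the nondegenerate pairing $\langle A,B\rangle=\operatorname{tr}(AB)$. In characteristic $2$ this pairing is degenerate on symmetric matrices --- for $A$ with zero diagonal one has $\operatorname{tr}(AB)=\sum_i a_{ii}b_{ii}=0$ for every symmetric $B$, so the alternating forms lie in its radical --- and correspondingly the translation dual of the symmetric-forms scheme in even characteristic is the scheme of quadratic forms, not the symmetric-forms scheme itself. Your closing paragraph hedges in the right direction (even $q$ ``must be handled separately''), but the self-duality premise on which the whole LP setup rests would have to be withdrawn for even $q$ and replaced by this symmetric/quadratic duality. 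In short: right framework and correct elementary steps, but the three hard ingredients (eigenvalues, the LP certificate for odd $d$, the duality argument for even $d$) are absent, and the duality structure you assume is incorrect for half of the cases the theorem covers.
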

	An additive $d$-code $\cC\subseteq \sS_n(q)$ whose size meets the upper bound in \eqref{eq:bound} is called \emph{maximum}. In fact, it is originally called \emph{maximal} in \cite{Schmidt_2010,Schmidt_2015,Schmidt_2020}. We adopt the term ``maximum" in order to remain consistent with \emph{maximum rank-metric code} (MRD code, for short) which defined to be a subset $|\cC|\subseteq \F_{q}^{m\times n}$ of minimum rank distance $d$ satisfying $\cC=q^{\min\{m,n\}(\max\{m,n\}-d+1)}$; for a survey on MRD codes, we refer to \cite{sheekey_MRD_survey}.
	
	No matter $n-d$ is even or odd, the upper bound in \eqref{eq:bound} can be achieved by concrete constructions which will be introduced later in this section.
	
	When $d$ is even, the upper bound on the sizes of non-additive $d$-codes becomes mysterious. General upper bounds for them can be found in \cite[Proposition 3.7]{Schmidt_2015} and \cite[Proposition 3.4]{RMC}. There do exist non-additive $d$-codes that contains more elements than maximum additive d-codes. For $n=3$ and $q > 2$, an infinite family of $2$–code of size $q^4 + q^3 + 1$ is provided in \cite{cossidente_symmetric_2022}. More sporadic examples can be bound in \cite[Table 2 and Table 9]{RMC}.
	
	We focus on maximum additive $d$-codes in $\sS_n(q)$ in this paper.
	So far,  there are only two known families of them in $\sS_n(q)$ for arbitrary $d$ up to equivalence. Both were discovered by Schmidt in \cite{Schmidt_2010,Schmidt_2015}.
	
	Denote \( \Tr(\cdot) \) as the trace function from \( \mathbb{F}_{q^n} \) to \( \mathbb{F}_q \).
	The first family is defined as follows. For any integer $1\leq d\leq n$ such that $n-d$ is even and for any $s$ coprime with $n$, consider the following subset of $\sS_n(q)$:
	\begin{equation}\label{eq:S_1}
		\cS_{n,d,s}=\left\{\Tr\left(b_0xy+\sum_{i=1}^{(n-d)/2}b_i\left(x^{q^{si}}y+y^{q^{si}}x\right)\right): b_0,\dots,b_{(n-d)/2}\in\fqn \right\}.	
	\end{equation}
	It is easy to see that there are exactly $q^{n(n-d+2)/2}$ elements in $\cS_{n,d,s}$ which meets the upper bound in \eqref{eq:bound}.
	
	The second family is in fact derived from the one above by the following secondary construction. Let $W$ be an $(n-1)$-dimensional subspace of $V$. For any subset $Y$,  define the punctured set with respect to $W$ of $Y$ to be
	$$Y|_W := \{B|_W : B\in Y\}$$
	where $B|_W$ is the restriction of $B$ onto $W$. As the rank of $B|_W$ is at most two less than the rank of $B$, $Y|_W$ is a maximum $d$-code of $\sS_n(q)$ with $n-d$ an odd integer provided that $Y$ is a $(d+2)$-code of $\sS_{n+1}(q)$ \cite[Theorem 4.1]{Schmidt_2015}.
	
	For special value of $d$, there are more constructions. When $d=n$, a maximum additive $n$-code in $\sS_n(q)$ corresponds to a symplectic semifield of order $q^n$, which is also equivalent to a commutative semifield under the Knuth operation. The study of (commutative) semifields was initiated by Dickson in \cite{dickson_commutative_1906} around 120 years ago. There are many constructions of semifields; see \cite{lavrauw_semifields_2011}. Among them, there are a few families of commutative semifields: for $q$ even, there is only one family which contains many inequivalent members, and they are constructed by Kantor in \cite{kantor_commutative_2003} which generalizes the one found by Knuth in \cite{knuth_class_1965}; for $q$ odd, besides those families mentioned in \cite{lavrauw_semifields_2011}, more new constructions are summarized in \cite{gologlu_exponential_2023}.
	
	For $d=2$ and $n>2$, two more constructions which are not equivalent to any previously known ones can be found in \cite{LLTZ} and \cite{zhou}.
	
	The construction of symmetric rank-metric codes using $q$-polynomials was pioneered by Schmidt, who established the fundamental framework \cite{Schmidt_2010}. In \cite{LLTZ}, the authors introduced the idea of adding \emph{twisted} terms to $q$-polynomials for new constructions of maximum symmetric rank-metric codes. 
		In this paper, we will follow this basic idea and obtain a new family of maximum symmetric $(n-2)$-codes.
	\begin{theorem}\label{th:main}
		For any positive integer $k$, let $n=2k$ and $s$ be an integer satisfying $0<s<2k$, $\gcd(s,2k)=1$. Let $q$ be an odd prime power. For any non-square $\eta\in \fqn$, define the following set of symmetric bilinear forms
		\begin{align*}
			\cT_{n,s,\eta}&=\left\{\Tr\left(b_0x^{q^{k}}y+b_1\left(x^{q^{s(k-1)}}y+y^{q^{s(k-1)}}x\right)+\eta b_2\left(x^{q^{ s(k-2)}}y+y^{q^{s(k-2)}}x\right)\right) \right.:\\ 
			&\left.		b_0, b_2\in\F_{q^k},b_1\in\F_{q^{2k}}\right\}.
		\end{align*}
		Then $\cT_{n,s,\eta}$ is a maximum $\fq$-linear $(n-2)$-code for $k=3,4$ and $5$.
	\end{theorem}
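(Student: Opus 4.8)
I would begin by recording the standard dictionary between symmetric bilinear forms and self-adjoint linearized maps: every $B\in\sS_n(q)$ can be written uniquely as $B(x,y)=\Tr(x\,\phi(y))$ for an $\fq$-linear map $\phi$ that is self-adjoint for the form $(u,v)\mapsto\Tr(uv)$, and $\rank B=\rank_{\fq}\phi=n-\dim_{\fq}\ker\phi$. Using $\Tr(z)=\Tr(z^{q^j})$ together with $x^{q^{2k}}=x$ and the memberships $b_0,b_2\in\fqk$, I would rewrite each generator of $\cT_{n,s,\eta}$ to obtain the explicit self-adjoint linearized polynomial
\[
\phi_{b_0,b_1,b_2}(y)=b_0 y^{q^{k}}+b_1 y^{q^{s(k-1)}}+b_1^{q^{-s(k-1)}} y^{q^{-s(k-1)}}+\eta b_2 y^{q^{s(k-2)}}+(\eta b_2)^{q^{-s(k-2)}} y^{q^{-s(k-2)}},
\]
with all exponents read modulo $2k$. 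The $\fq$-linearity of $\cT_{n,s,\eta}$ is immediate from this parametrization. It then remains to prove (i) the map $(b_0,b_1,b_2)\mapsto\phi_{b_0,b_1,b_2}$ is injective, so that $|\cT_{n,s,\eta}|=q^{k}\cdot q^{2k}\cdot q^{k}=q^{4k}$, which is exactly the bound in \eqref{eq:bound} since $n-d=2$ is even; and (ii) $\dim_{\fq}\ker\phi_{b_0,b_1,b_2}\le 2$ for every nonzero triple, which is equivalent to $\rank B\ge n-2$ and hence to the $(n-2)$-code property.

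\textbf{Size.} For (i) I would first note that $\gcd(s,2k)=1$ forces $s$ to be odd, whence $sk\equiv k\pmod{2k}$ and the five exponents collapse to the centered arithmetic progression $\{k-2s,\,k-s,\,k,\,k+s,\,k+2s\}\pmod{2k}$, with common difference $s$. Because $s$ is a unit modulo $2k$, these five residues are pairwise distinct precisely when $-2,-1,0,1,2$ are distinct modulo $2k$, i.e. when $k\ge 3$. Distinctness of the exponents means that $\phi_{b_0,b_1,b_2}$ determines each coefficient, so $b_0,b_1,b_2$ are recovered and injectivity (hence the maximum size) follows. This step already pinpoints $k\ge 3$ as a necessary feature of the construction.

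\textbf{The rank condition (the crux).} For (ii) the exponent support, being a $5$-term arithmetic progression dilated by the unit $s$, is scattered around $\Z/2k\Z$, so no bound coming purely from the cyclic spread of the support can reach $\dim\ker\le 2$; the argument must exploit the self-adjoint coefficient relations $a_{-i}=a_i^{q^{-i}}$ and, decisively, the hypothesis that $\eta$ is a non-square. The plan is to argue by contradiction: assuming $\dim_{\fq}\ker\phi\ge 3$, I would translate ``$\phi$ vanishes on a $3$-dimensional $\fq$-subspace'' into the simultaneous vanishing of the size-$(n-2)$ minors of the Dickson matrix $M=\big(a_{j-i}^{q^i}\big)_{0\le i,j<n}$ attached to $\phi$, i.e. into a polynomial system in $b_0,b_1,b_2$ with $\eta$ as a parameter and the field constraints $b_0,b_2\in\fqk$. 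I expect this system to force a relation among $b_1,b_2$ and $\eta$ that makes $\eta$ (or $\eta$ times a norm) a square in $\fqk$ or $\fqn$, contradicting the choice of $\eta$; the degenerate branches $b_2=0$, $b_1=0$, $b_0=0$ would be treated separately, each reducing to a shorter self-adjoint polynomial whose rank is controlled directly in the spirit of the known family $\cS_{n,n-2,s}$.

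\textbf{Main obstacle.} The decisive difficulty is exactly step (ii): extracting from the large-kernel assumption the precise algebraic identity that collides with the non-squareness of $\eta$, while keeping all degenerate branches under control. Since the requisite exponent bookkeeping modulo $2k$ and the resulting minor/resultant computations genuinely depend on $k$, I anticipate settling $k=3,4,5$ one at a time, in each case reducing the obstruction to the statement that a single explicit determinant (a polynomial in the $b_i$ with coefficients involving powers of $\eta$) can vanish only when the triple is zero — a verification that is structured but may be carried out with computer assistance. This case-by-case nature is consistent with the theorem being asserted only for $k\in\{3,4,5\}$.
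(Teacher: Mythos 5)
Your setup, the passage to the self-adjoint $q$-polynomial, and the size count are all correct, and your overall plan coincides with the paper's: assume a nonzero member has rank less than $n-2$, so every $(n-2)\times(n-2)$ minor of its Dickson matrix vanishes (Lemma \ref{lem:dickson_rank}), split off the degenerate branches (your treatment of $b_2=0$ matches the paper's Lemma \ref{lem:2_2}, where $g$ factors as a $q^{k-1}$-th power of a polynomial of degree $q^2$), and contradict the non-squareness of $\eta$, case by case for $k=3,4,5$. The genuine gap is that your step (ii) — the entire mathematical content of the theorem — is never executed; it is a statement of expectation (``I expect this system to force a relation \dots''). What is actually required, and what the paper supplies, is: (1) for each $k$ a careful choice of \emph{two} specific $(n-2)\times(n-2)$ submatrices whose determinants can be reorganized into identities of the form $(\text{explicit combination})^2=4(\eta b_2)^{e}\cdot(\text{explicit factor})$; and (2) a contradiction mechanism. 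For $k=3$ the mechanism is the one you predict (a square equals a visible non-square). But for $k=4$ (branch $b_0\neq 0$) and $k=5$ (branch $U\neq 0$) that mechanism fails: both sides can be squares, and the paper instead combines the two square identities, shows a quotient such as $W^{q^4+1}/U^{q^3+1}$ equals $1$, writes $U,V,W$ and $\eta b_2$ as powers of a primitive element, and derives a parity contradiction among the discrete-log exponents (an even integer forced to equal the odd exponent of $\eta b_2$). This number-theoretic step is of a different nature from ``$\eta$ times a norm is a square,'' and there is no reason a generic minor/resultant elimination would land on it; so the crux of your proposal is not merely unfinished but under-specified at the exact point where the difficulty lies.

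A secondary omission: you work with arbitrary $s$ throughout, whereas the paper first reduces to $s=1$ (Lemma \ref{lem:s->1_our}, resting on the scalar-extension rank theorem), which is what gives the Dickson matrix the fixed band structure making the two-minor computations feasible and $s$-independent. Your observation that the exponent support is the dilation by the unit $s$ of $\{k-2,\dots,k+2\}$ is the reason such a reduction should exist, but it is not a proof of it; without this step your polynomial system, and hence your entire case analysis, depends on $s$.
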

	
	The rest parts of this paper are organized as follows: In Section \ref{sec:pre} we revise some basic concepts and results about linearized polynomials and the equivalence defined over symmetric rank-distance codes. Then we prove the proof of Theorem \ref{th:main} in Section \ref{sec:proof_main}. The proof that the codes constructed in Theorem \ref{th:main} are not equivalent to any known ones given in Section \ref{sec:equi} in which we also completely determine the equivalence between different members in the family obtained in Theorem \ref{th:main}. Section \ref{sec:conclu} concludes the paper with an open question.
	
	\section{Preliminaries}\label{sec:pre}
	
	For any prime power $q$ and positive integer $n$, a polynomial
	\[
	f:= \sum_{i=0}^m a_i X^{q^i}
	\]
	with $a_i\in \fqn$,
	is called a \emph{$q$-polynomial} or a \emph{linearized polynomial}. If $a_m\neq 0$, then we say the \emph{$q$-degree} of $f$ is $m$.
	It is easy to see that the map $x\mapsto f(x)$ is $\F_q$-linear over $\F_{q^n}$ for every $q$-polynomial $f$. Hence one can define the \emph{rank} of $f$ which equals the rank of the corresponding $\F_q$-linear map.
	
	It is well known that $\left\{ \sum_{i=0}^{n-1} a_i X^{q^i}: a_i\in \fqn \right\}$ under addition and multiplication modulo $X^{q^n}-X$ is isomorphic to the matrix ring $\F_q^{n\times n}$.
	
	Given a $q$-polynomial $f$ of $q$-degree $m$, it is clear that the $\F_q$-dimension of the kernel of the $\F_q$-linear map $x\mapsto f(x)$ is upper bounded by $m$. In fact, one can also generalize this result to the following shape.
	
	\begin{theorem}\label{th:Gow}\cite[Theorem 5]{gow_galois_2009}	
		Let $\mathbb{L}$ be a cyclic extension of a field $\F$ of degree $n$, and suppose that $\sigma$ generates the Galois group of $\mathbb{L}$ over $\F$. Let $k$
		be an integer satisfying $1 \leq k \leq n$, and let $a_0, a_1,\cdots, a_k$ be elements of $\mathbb{L}$,
		not all of them are zero. Then the $\F$-linear transformation of $\mathbb{L}$ defined as
		$$f(x)=a_0x+a_1x^{\sigma}+\cdots+a_kx^{\sigma^k},$$
		has kernel with dimension at most $k$ in $\mathbb{L}$.
	\end{theorem}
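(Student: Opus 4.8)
The plan is to prove the bound by induction on $k$, working inside the twisted (Ore) polynomial ring $R=\mathbb{L}[t;\sigma]$ whose multiplication is governed by $t\,\lambda=\lambda^{\sigma}t$ for $\lambda\in\mathbb{L}$. The key structural fact I would set up first is that sending $t\mapsto\sigma$ and each $\lambda\in\mathbb{L}$ to multiplication by $\lambda$ defines a ring homomorphism $\rho\colon R\to\mathrm{End}_{\F}(\mathbb{L})$; one checks directly that it respects the relation $t\lambda=\lambda^{\sigma}t$, so that composition of operators corresponds to multiplication in $R$. Under $\rho$ our map $f$ is exactly the operator attached to $F=\sum_{i=0}^{k}a_i t^i\in R$. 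Since $\dim_{\F}\mathbb{L}=n$, the bound is automatic once $k\ge n$, so the content lies in $k\le n$, although the induction itself runs over all $k\ge 0$.

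For the base cases I would dispose of $k=0$ and $k=1$ directly. If $F=a_0\neq0$ then $f$ is multiplication by $a_0$, hence injective, so $\dim_{\F}\ker f=0$. If $F=a_1t+a_0$ with $a_1\neq0$, then any two nonzero kernel elements $x,y$ satisfy $x^{\sigma}/x=-a_0/a_1=y^{\sigma}/y$, whence $(x/y)^{\sigma}=x/y$, so $x/y\in\F$ and $\ker f$ is at most one-dimensional over $\F$. These will serve as the anchor for the strong induction.

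For the inductive step with $k\ge2$ I may assume the top coefficient $a_k\neq0$, since otherwise $F$ has strictly smaller degree and the inductive hypothesis already gives the bound. If $\ker f=\{0\}$ we are done; otherwise I choose a nonzero $x_0\in\ker f$ and set $c=x_0^{\sigma}/x_0$, so the monic degree-one element $t-c$ annihilates $x_0$. Right division in $R$ by $t-c$ (valid because $t-c$ has invertible leading coefficient) yields $F=G\,(t-c)+r$ with $r\in\mathbb{L}$ and $\deg G=k-1$, the leading coefficient of $G$ being $a_k\neq0$. Passing to operators and evaluating at $x_0$ gives $0=f(x_0)=g\big((t-c)(x_0)\big)+r\,x_0=r\,x_0$, so $r=0$; thus $F=G(t-c)$ and $f=g\circ(t-c)$ as $\F$-linear maps.

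Finally I would combine this factorization with rank--nullity. The operator $t-c$ restricts to an $\F$-linear map $\ker f\to\ker g$, since $g\big((t-c)(x)\big)=f(x)=0$ for $x\in\ker f$; its kernel equals $\ker(t-c)$ (which is contained in $\ker f$) and has dimension at most $1$ by the base case, while its image lies in $\ker g$, of dimension at most $k-1$ by the inductive hypothesis applied to the nonzero degree-$(k-1)$ element $G$. Hence
\[
\dim_{\F}\ker f=\dim_{\F}\ker(t-c)+\dim_{\F}\big((t-c)(\ker f)\big)\le 1+(k-1)=k,
\]
closing the induction. The only nonroutine ingredient, and the crux of the argument, is the existence of right Euclidean division by a monic degree-one element in $\mathbb{L}[t;\sigma]$ together with the identification of multiplication in $R$ with composition of operators; both are standard features of skew polynomial rings, and it is precisely this machinery that upgrades the familiar finite-field statement (that a $q$-polynomial of $q$-degree $m$ has kernel of $\F_q$-dimension at most $m$) to an arbitrary cyclic extension.
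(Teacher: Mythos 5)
Your proof is correct, and it is worth noting that the paper itself offers no argument for this statement: it is imported verbatim as Theorem~5 of Gow and Quinlan \cite{gow_galois_2009}, so the only possible comparison is with that cited source rather than with anything internal to the paper. Your skew-polynomial induction is a complete and self-contained proof in the stated generality. The load-bearing steps all check out: the evaluation map $\rho\colon \mathbb{L}[t;\sigma]\to \mathrm{End}_{\F}(\mathbb{L})$ respects the defining relation $t\lambda=\lambda^{\sigma}t$, so composition of operators matches multiplication in the Ore ring; right division by the monic degree-one element $t-c$ with $c=x_0^{\sigma}/x_0$ is available in any skew polynomial ring over a field; evaluating the division identity at $x_0$ kills the remainder; the quotient $G$ is nonzero of degree exactly $k-1$ because its leading coefficient equals $a_k$, so the strong inductive hypothesis applies; and rank--nullity for the restriction of $t-c$ to $\ker f$ gives $\dim_{\F}\ker f\le \dim_{\F}\ker(t-c)+\dim_{\F}\ker g\le 1+(k-1)$, where your $k=1$ base case correctly uses that the fixed field of $\sigma$ is $\F$ --- the one place where cyclicity of the extension (as opposed to mere normality) enters. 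This is essentially the classical factorization argument for $\sigma$-polynomials: a nonzero kernel element produces a right root, hence a right factor $t-c$, and induction does the rest; Gow and Quinlan's treatment of kernels of maps $\sum a_i x^{\sigma^i}$ is in the same spirit, so your route is not a detour but the standard proof, and it correctly upgrades the familiar finite-field kernel bound for $q$-polynomials to arbitrary cyclic extensions, which is precisely the generality the paper needs for Lemma~\ref{le:k,a_i}.
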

	\begin{lemma}\label{le:k,a_i}
		Let $n=2k$ and $a_i\in\fqn,i=0,1,\cdots, k-1$. Suppose that $\sigma$ generates $\Gal(\fqn/\fq)$. If  $\sum_{i=0}^{k-1}a_ix^{\sigma^i}=0$ for all $x\in\fqk$, then $a_i=0,i=0,1,\cdots, k-1$.		
	\end{lemma}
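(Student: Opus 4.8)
The plan is to apply Theorem \ref{th:Gow} directly, exploiting the fact that $\fqk$ sits inside $\fqn=\F_{q^{2k}}$ as an $\fq$-subspace of dimension exactly $k$, whereas the map $x\mapsto\sum_{i=0}^{k-1}a_ix^{\sigma^i}$ involves only the powers $\sigma^0,\dots,\sigma^{k-1}$ and therefore has too small a kernel to contain such a subspace unless it is the zero map.

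First I would set $f(x)=\sum_{i=0}^{k-1}a_ix^{\sigma^i}$ and regard $f$ as an $\fq$-linear transformation of $\mathbb{L}=\fqn$. Since $\sigma$ generates $\Gal(\fqn/\fq)$ by hypothesis, the extension $\fqn/\fq$ is cyclic of degree $n=2k$ with $\sigma$ a generator, so the hypotheses of Theorem \ref{th:Gow} hold with its integer parameter taken to be $k-1$ (our top term $x^{\sigma^{k-1}}$ matching degree $k-1$ there). Arguing by contradiction, suppose that not all of $a_0,\dots,a_{k-1}$ vanish. Then Theorem \ref{th:Gow} gives that $\ker f$ has $\fq$-dimension at most $k-1$; note this bound is valid even if the top coefficient $a_{k-1}$ happens to be zero, since the theorem only asks that the coefficients be not all zero.

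Next I would observe that $\fqk$ is a subfield of $\F_{q^{2k}}$, hence an $\fq$-subspace of $\fqn$ of dimension $k$, and that the hypothesis $\sum_{i=0}^{k-1}a_ix^{\sigma^i}=0$ for all $x\in\fqk$ says exactly that $\fqk\subseteq\ker f$. Comparing dimensions yields $k=\dim_{\fq}\fqk\le\dim_{\fq}\ker f\le k-1$, which is absurd. Hence all $a_i=0$. The degenerate case $k=1$, for which Theorem \ref{th:Gow} is not directly available, is immediate, since there $a_0x=0$ for all $x\in\fq$ forces $a_0=0$.

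I do not expect a genuine obstacle here; the only points requiring care are the index bookkeeping in Theorem \ref{th:Gow} and the verification that $\fqk$ is a $k$-dimensional $\fq$-subspace lying in the kernel. As an alternative route avoiding Theorem \ref{th:Gow}, one could restrict to $\fqk$ directly: writing $\sigma\colon x\mapsto x^{q^t}$ with $\gcd(t,2k)=1$ forces $\gcd(t,k)=1$, so on $\fqk$ the exponents reduce to $ti\bmod k$, which run over a complete residue system modulo $k$; thus $f$ restricts to a linearized polynomial of $q$-degree at most $k-1$ possessing $q^{k}>q^{k-1}$ roots in $\fqk$, again forcing it to vanish identically and hence all $a_i=0$. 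The coprimality $\gcd(t,k)=1$ is the single fact this second argument genuinely needs.
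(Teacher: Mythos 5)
Your proposal is correct and takes essentially the same route as the paper's own proof: the kernel of $x\mapsto\sum_{i=0}^{k-1}a_ix^{\sigma^i}$ contains $\fqk$, an $\fq$-subspace of dimension $k$, while Theorem \ref{th:Gow} (applied with parameter $k-1$) bounds the kernel dimension by $k-1$ unless all $a_i$ vanish. Your added care about the degenerate case $k=1$ and about the top coefficient possibly being zero, as well as the elementary alternative via root-counting of the reduced linearized polynomial on $\fqk$, go beyond the paper's one-line argument, but the core reasoning is identical.
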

	\begin{proof}
		As the $\F_q$-linear map $x\mapsto\sum_{i=0}^{k-1}a_ix^{\sigma^i}$ has kernel with dimension at least $k$ in $\fqn$, then by Theorem \ref{th:Gow}, we get $a_i=0,i=0,1,\cdots, k-1$.
	\end{proof}
	For $f=\sum_{i=0}^{n-1}a_i X^{q^i}$ with $a_0,a_1,\ldots,a_{n-1}\in\F_{q^n}$ and let $D_f$ denote the associated \emph{Dickson matrix} (or \emph{$q$-circulant matrix})
	\[D_f:=
	\begin{pmatrix}
		a_0 & a_1 & \ldots & a_{n-1} \\
		a_{n-1}^q & a_0^q & \ldots & a_{n-2}^q \\
		\vdots & \vdots & \vdots & \vdots \\
		a_1^{q^{n-1}} & a_2^{q^{n-1}} & \ldots & a_0^{q^{n-1}}
	\end{pmatrix}.
	\]
	
	\begin{lemma}\cite[Proposition 4.4]{wu_linearized_2013}\label{lem:dickson_rank}
		For any $q$-polynomial $f$ of $q$-degree no larger than $n-1$, the rank of $f$ equals $\rank(D_f)$.
	\end{lemma}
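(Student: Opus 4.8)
The goal is to show that the $\F_q$-linear map $x\mapsto f(x)$ on $\fqn$ and the $\fqn$-linear map $\mathbf v\mapsto D_f\mathbf v$ on $\fqn^{\,n}$ have the same rank. The plan is to realize $D_f$ as the matrix of $f$ after conjugation by a Moore matrix. The starting point is the identity
\[
D_f\begin{pmatrix} x\\ x^q\\ \vdots\\ x^{q^{n-1}}\end{pmatrix}=\begin{pmatrix} f(x)\\ f(x)^q\\ \vdots\\ f(x)^{q^{n-1}}\end{pmatrix},\qquad x\in\fqn.
\]
To verify it I would expand $f(x)^{q^k}=\sum_{i=0}^{n-1}a_i^{q^k}x^{q^{i+k}}$ and substitute $j\equiv i+k\pmod n$, using $x^{q^n}=x$ on $\fqn$; the coefficient of $x^{q^j}$ becomes $a_{j-k}^{q^k}$ (all indices read modulo $n$), which is exactly the $(k,j)$-entry of $D_f$. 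This is the only genuine computation, and it is routine exponent bookkeeping.

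Writing $\phi(x)=(x,x^q,\dots,x^{q^{n-1}})^{\top}$, the map $\phi\colon\fqn\to\fqn^{\,n}$ is $\F_q$-linear, since $(x+y)^{q^i}=x^{q^i}+y^{q^i}$ and $(cx)^{q^i}=c\,x^{q^i}$ for $c\in\F_q$; with this notation the identity above reads $D_f\,\phi(x)=\phi(f(x))$. Next I would fix an $\F_q$-basis $e_1,\dots,e_n$ of $\fqn$ and form the Moore matrix $L=(e_j^{q^i})_{0\le i\le n-1,\,1\le j\le n}$, whose $j$-th column is precisely $\phi(e_j)$. Because the $e_j$ are $\F_q$-linearly independent, the Moore determinant of $L$ is nonzero, so $L$ is invertible over $\fqn$. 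Let $C=(c_{kj})\in\F_q^{n\times n}$ be the matrix of $x\mapsto f(x)$ in this basis, i.e.\ $f(e_j)=\sum_k c_{kj}e_k$; by definition $\rank(f)=\rank(C)$.

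The key step is to compare $D_fL$ and $LC$ column by column. The $j$-th column of $D_fL$ is $D_f\phi(e_j)=\phi(f(e_j))$, and since the $c_{kj}$ lie in $\F_q$ and $\phi$ is $\F_q$-linear,
\[
\phi(f(e_j))=\phi\!\Big(\sum_k c_{kj}e_k\Big)=\sum_k c_{kj}\,\phi(e_k),
\]
which is exactly the $j$-th column of $LC$. Hence $D_fL=LC$, so $D_f=LCL^{-1}$ and $\rank(D_f)=\rank(C)=\rank(f)$, the last equality holding because the rank of a matrix with entries in $\F_q$ is unchanged under the field extension $\F_q\subseteq\fqn$. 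The main (and only modest) obstacle is establishing the first identity and recalling the nonsingularity of the Moore matrix; once these are in place the conjugation argument is immediate.
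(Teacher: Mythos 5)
Your proof is correct: the identity $D_f\,\phi(x)=\phi(f(x))$ checks out (entry $(k,j)$ of $D_f$ is indeed $a_{j-k}^{q^k}$ with indices mod $n$, matching the expansion of $f(x)^{q^k}$ using $x^{q^n}=x$), the Moore matrix $L$ of an $\F_q$-basis is invertible over $\fqn$, and $D_fL=LC$ yields $\rank(D_f)=\rank(C)=\rank(f)$, the last step legitimately using invariance of matrix rank under the extension $\F_q\subseteq\fqn$. The paper itself gives no proof — it simply cites Wu--Liu \cite[Proposition~4.4]{wu_linearized_2013} — and your conjugation-by-Moore-matrix argument is essentially the standard proof found in that reference, so nothing further is needed.
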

	
	For a $q$-polynomial $f = \sum_{i=0}^{n-1} a_i X^{q^i}$, its \emph{adjoint} polynomial $\hat{f}$  is defined as
	\[
	\hat{f} := a_0X + \sum_{i=1}^{n-1} a_i^{q^{n-i}} X^{q^{n-i}}. 
	\]
	One of the most important property of $\hat{f}$ is the following
	\[
	\Tr(f(x)y) = \Tr(x\hat{f}(y))
	\]
	for any $x,y\in \fqn$, where $\Tr(\cdot)$ stands for the absolute trace function from $\fqn $ to $\F_q$. It is obvious that $\Tr(f(x)y)$ is a symmetric bilinear form if and only if $f=\hat{f}$.
	
	For given $a \in \fq^*$, $\sigma\in \Aut(q)$, invertible matrix $P\in \F_q^{n\times n}$ and $\sS_0 \in \sS_n(q)$, we define $\Phi:\sS_n(q)\rightarrow \sS_n(q)$ by
	\begin{equation}\label{eq:phi}
		\Phi (C)=aP^TC^\sigma P+\sS_0.
	\end{equation}
	where $P^T$ stands for the transpose of $P$ and $C^\sigma=(c_{ij}^\sigma)$ for $C= (c_{ij})$. It is readily verified that the map $\Phi$ preserves the rank-distance on $\sS_n(q)$. In fact, the converse statement is also true except for the case that $q=2$ and $n=3$; see \cite{WZ}. 
	\begin{definition}
		For two subsets $\cC_1$ and $\cC_2$ of $\sS_n(q)$, if there exists $\Phi$ defined by \eqref{eq:phi} for certain $a\in \F_q^*$, $P\in \GL(n,q)$, $\sigma\in \Aut(\F_q)$ and $\sS_0\in \sS_n(q)$  such that
		\[\cC_2=\{\Phi(C):C\in \cC_1\},\]
		then we say $\cC_1$ and $\cC_2$ are \emph{equivalent} in $\sS_n (q)$ and we write $\cC_1 \cong \cC_2$.
	\end{definition}
	
	It is not difficult to see that if $\cC_1$ and $\cC_2$ are additive, we may assume $\sS_0=0$ in the definition above. If they are both $\F_q$-linear, we may further assume that $a=1$.

		Given an additive subgroup (additive code) $\mathcal{C} \subseteq \mathscr{S}_n(q)$, its \emph{Delsarte dual} $\mathcal{C}^{\perp}$ is defined as
		\[
		\mathcal{C}^{\perp} := \left\{ D \in \mathscr{S}_n(q) : \operatorname{tr}(C D) = 0 \ \text{for all } C \in \mathcal{C} \right\},
		\]
		where $\mathrm{tr}(M)$ stands for the trace of matrix $M$; see \cite{Schmidt_2015} for more details.
		
		There exists a natural representation mapping symmetric matrices to linearized polynomials. Under this representation, the set of symmetric matrices $\mathscr{S}_n(q)$ corresponds precisely to the set of all self-adjoint linearized polynomials of $q$-degree up to $n-1$. We denote this polynomial set by $W$:
		\[
		W= \left\{f = \sum_{i=0}^{n-1} f_i X^{q^i} \in \mathbb{F}_{q^n}[X] : \hat{f}=f \right\}.
		\]
		Under this correspondence, matrix transposition corresponds precisely to taking the adjoint of a linearized polynomial.  
		
		We define an $\mathbb{F}_q$-bilinear form (inner product) on $W$ by
		\[
		\langle f, g \rangle := \Tr\left(\sum_{i=0}^{n-1} f_i g_i \right),
		\]
		for $f=\sum_{i} f_i X^{q^{i}}, g=\sum_{i} g_i X^{q^{i}} \in W$.
		The $\F_q$-linear space $W$ together with the inner product defined as above is essentially equivalent to $\mathscr{S}_n(q)$ equipped with the inner product $\mathrm{tr}(AB)$ for $A,B\in \mathscr{S}_n(q)$.
		
		Hence, for an additive subset $\mathcal{C} \subseteq W$, its \emph{Delsarte dual} can be  defined as
		\begin{equation}\label{eq:dual}
			\mathcal{C}^{\perp} := \left\{ f \in W \mid \langle f, g \rangle = 0 \ \text{for all } g \in \mathcal{C} \right\}.
		\end{equation}
	
	\section{Proof of Theorem \ref{th:main}}\label{sec:proof_main}
	
	This section presents the complete proof of Theorem \ref{th:main}.
	
	It is clear that there are exactly $q^{2n}$ elements in $\cT_{n,s,\eta}$ which meets the upper bound given in \eqref{eq:bound}. Thus we only have to show that the rank of any nonzero symmetric bilinear form in $\cT_{n,s,\eta}$ is larger than or equal to $n-2$.
	
	For any element $T(x,y) \in \cT_{n,s,\eta}$, we have
	$$T(x,y)=\Tr(yg(x)),$$
	where 
	\begin{equation}\label{eq:g}
		g(x)=b_0x^{q^{sk}}+b_1x^{q^{s(k-1)}}+(b_1x)^{q^{s(k+1)}}+\eta b_2x^{q^{s(k-2)}}+(\eta b_2x)^{q^{s(k+2)}}.
	\end{equation}
	It is easy to see that the rank of the bilinear form $T(x,y)$ equals to the rank of the $q$-polynomial $g$.

		Given the length and technical nature of the argument, we first outline the proof strategy for the reader's convenience. Our goal is to show that the rank of $g$ (and hence that of $T(x,y)$) is at least $n-2$. By Lemma \ref{lem:dickson_rank}, we only have to show that the associated Dickson matrix $D(g)$ is of rank at least $n-2$.
		\begin{itemize}
			\item\textbf{Step.\ 1}  We begin by establishing that it suffices to prove the theorem for \( s = 1 \) instead of every $s$; see Lemma \ref{lem:s->1_our}. 
		\end{itemize}
		The proof then splits into two cases based on the coefficient \( b_2 \).
		\begin{itemize}
			\item \textbf{Step.\ 2}  The case \( b_2 = 0 \) is resolved directly; see Lemma \ref{lem:2_2}.
		\end{itemize}
		\begin{itemize}
			\item \textbf{Step.\ 3} For the case $ b_2 \neq 0$, we have to handle the three cases \(k=3\), \(k=4\) and \(k=5\) separately, because for each  different value of $k$ we have to pick up at least two different \((n-2)\times(n-2)\) sub‑matrices of $D(g)$ to verify the required rank condition.
		\end{itemize}
		It should be noted that we could not find any apparent uniform pattern for choosing these sub‑matrices in Step 3; the choice depends heavily on the parameters. The computations for \(k=4\) and \(k=5\) are quite complicated and we have  to use symbolic computation software such as Maple.
		
		With this structure in mind, we now proceed to the detailed arguments.

	First we show that we only have to handle the proof of Theorem \ref{th:main} for $s=1$.
	\begin{theorem}\label{th:s->1}
		Let $s$ be an integer coprime to $n$ and let $\sigma$ be a generator of $\mathrm{Gal}(\mathbb{F}_{q^n}/\mathbb{F}_q)$. 
		For any $f_0,\dots,f_{n-1}\in \F_{q^n}$, define
		\begin{table*}[h]
			\begin{tabular}{lllll}
				$\varphi_1:$ & $\F_{q^{sn}} \rightarrow \F_{q^{sn}}$ &   and & $\varphi_2:$ & $\F_{q^{n}} \rightarrow \F_{q^{n}}$ \\
				& $x\mapsto \sum_{i=0}^{n-1} f_i x^{\sigma^{is}}$,&    &  & $x\mapsto \sum_{i=0}^{n-1} f_i x^{\sigma^{is}}$.
			\end{tabular}
		\end{table*}
		
		\noindent Then the rank of the $\F_{q^s}$-linear map $\varphi_1$ is the same as the rank of the $\F_q$-linear map $\varphi_2$.
	\end{theorem}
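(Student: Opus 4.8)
The plan is to reinterpret both ranks as ranks of Dickson matrices and then exploit the invariance of matrix rank under field extension. Taking $\sigma$ to be the Frobenius $x\mapsto x^q$ (the general case being identical after relabelling), write $Q:=q^s$, so that $\F_{q^{sn}}=\F_{Q^n}$ is a degree-$n$ cyclic extension of $\F_{q^s}=\F_Q$ with associated Frobenius $x\mapsto x^{\sigma^s}=x^{Q}$. Then $\varphi_1(x)=\sum_{i=0}^{n-1}f_i x^{Q^i}$ is a $Q$-polynomial of $Q$-degree at most $n-1$, and Lemma \ref{lem:dickson_rank} applied with $q$ replaced by $Q$ gives that the $\F_{q^s}$-rank of $\varphi_1$ equals the rank over $\F_{q^{sn}}$ of the Dickson matrix $D_1$ whose $(r,c)$-entry is $f_{(c-r)\bmod n}^{\,q^{sr}}$. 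For $\varphi_2$, since $\gcd(s,n)=1$ the map $i\mapsto si\bmod n$ is a bijection of $\{0,\dots,n-1\}$, so setting $g_{si\bmod n}:=f_i$ rewrites $\varphi_2(x)=\sum_{i}f_i x^{q^{si}}=\sum_{j}g_j x^{q^{j}}$ as an ordinary $q$-polynomial on $\fqn$ of $q$-degree at most $n-1$; Lemma \ref{lem:dickson_rank} then identifies the $\fq$-rank of $\varphi_2$ with the rank over $\fqn$ of the Dickson matrix $D_2$ whose $(r,c)$-entry is $g_{(c-r)\bmod n}^{\,q^{r}}$.

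The crux is to compare $D_1$ and $D_2$. First I would observe that although $D_1$ a priori has entries in $\F_{q^{sn}}$, every entry $f_{(c-r)\bmod n}^{\,q^{sr}}$ in fact lies in $\fqn$, because $f_j\in\fqn$ and the Frobenius $x\mapsto x^q$ stabilises $\fqn$; hence both $D_1$ and $D_2$ are matrices over $\fqn$. Next I would check that the row permutation $r\mapsto sr\bmod n$ together with the column permutation $c\mapsto sc\bmod n$ carries $D_2$ onto $D_1$: after these permutations the $(r,c)$-entry of $D_2$ becomes $g_{(sc-sr)\bmod n}^{\,q^{sr}}=g_{s(c-r)\bmod n}^{\,q^{sr}}=f_{(c-r)\bmod n}^{\,q^{sr}}$, which is exactly the $(r,c)$-entry of $D_1$ (here one uses $a^{q^{sr\bmod n}}=a^{q^{sr}}$ for $a\in\fqn$). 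Since row and column permutations preserve rank, $\rank_{\fqn}(D_1)=\rank_{\fqn}(D_2)$.

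Finally I would invoke that the rank of a matrix is unchanged under field extension: as $D_1$ has all its entries in $\fqn\subseteq\F_{q^{sn}}$, its rank computed over $\F_{q^{sn}}$ equals its rank computed over $\fqn$. Chaining the identifications yields
\[
\rank_{\F_{q^s}}(\varphi_1)=\rank_{\F_{q^{sn}}}(D_1)=\rank_{\fqn}(D_1)=\rank_{\fqn}(D_2)=\rank_{\fq}(\varphi_2),
\]
as desired. The only genuinely delicate point is the index bookkeeping in the second paragraph — tracking how the reindexing $g_{si}=f_i$ interacts with the Frobenius twists $q^{sr}$ inside the two Dickson matrices — together with the care needed to apply Lemma \ref{lem:dickson_rank} with the correct base field in each case ($\F_{q^s}$ for $\varphi_1$ and $\fq$ for $\varphi_2$). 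Everything else is formal.
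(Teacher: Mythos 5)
Your proof is correct, and the comparison here is unusual: the paper itself contains no proof of Theorem \ref{th:s->1} at all — immediately after the statement it defers to \cite[Theorem 3.2]{neri_extending_2022}, crediting the underlying idea to Gow--Quinlan and Sheekey. So what you have produced is a self-contained argument where the paper offers only a pointer, and the two routes are genuinely different. The argument the paper points to is conceptual: since $\gcd(s,n)=1$, one has $\F_{q^{sn}}\cong\fqn\otimes_{\F_q}\F_{q^s}$, a basis of $\fqn$ over $\F_q$ is also a basis of $\F_{q^{sn}}$ over $\F_{q^s}$, and $\varphi_1$ is precisely the scalar extension $\varphi_2\otimes\mathrm{id}_{\F_{q^s}}$ (both maps are $\F_{q^s}$-linear and agree on $\fqn$, which spans $\F_{q^{sn}}$ over $\F_{q^s}$), so equality of ranks is just invariance of rank under base change. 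Your argument instead runs everything through Lemma \ref{lem:dickson_rank}: identify $\rank_{\F_{q^s}}(\varphi_1)$ with the rank of the Dickson matrix $D_1$ built from the $q^s$-polynomial (entries $f_{(c-r)\bmod n}^{q^{sr}}$), identify $\rank_{\F_q}(\varphi_2)$ with the rank of $D_2$ after the reindexing $g_{si\bmod n}:=f_i$ (legitimate because $\gcd(s,n)=1$), note that both matrices in fact have all entries in $\fqn$, and check that the simultaneous row/column permutation $x\mapsto sx\bmod n$ carries $D_2$ onto $D_1$. I verified the bookkeeping at the two points where it could fail — $g_{s(c-r)\bmod n}=f_{(c-r)\bmod n}$, and $a^{q^{sr\bmod n}}=a^{q^{sr}}$, which holds only because $a\in\fqn$ — as well as the final appeal to invariance of matrix rank under field extension; all of it goes through, and your reduction to $\sigma$ being the Frobenius is also harmless since a general generator $x\mapsto x^{q^t}$, $\gcd(t,n)=1$, just induces one more coefficient relabelling of the same kind. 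As for what each approach buys: the tensor-product argument is shorter, coordinate-free, and explains structurally why the ranks agree; your argument is elementary, fully explicit, and uses only machinery (the Dickson matrix lemma) that the paper has already quoted, so it could serve as an internal proof where the paper currently has none.
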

	Theorem \ref{th:s->1} and its proof can be found in \cite[Theorem 3.2]{neri_extending_2022}. The idea behind its proof and its applications to the construction of rank-metric codes have been appeared earlier which can be found in \cite{gow_galois_2009} and \cite{sheekey_new_2016}.
	
	\begin{lemma}\label{lem:s->1_our}
		Let $n$ and $s$ be two coprime positive integers. Let $\bar{q}=q^s$. Define an $\F_{\bar{q}}$-linear map
		\begin{align*}
			\psi:~ &\F_{\bar{q}^n} \rightarrow \F_{\bar{q}^n} \\
			&x \mapsto b_0x^{\bar{q}^{k}}+b_1x^{\bar{q}^{k-1}}+(b_1x)^{\bar{q}^{k+1}}+\eta b_2x^{\bar{q}^{k-2}}+(\eta b_2x)^{\bar{q}^{k+2}},
		\end{align*}
		where $\eta,b_0,b_1,b_2\in \F_{q^n}$. Then the $\F_q$-linear map $\psi|_{\F_{q^n}}$ has the same rank of the $\F_{\bar{q}}$-linear map $\psi$ over $\F_{q^{sn}}$. 
	\end{lemma}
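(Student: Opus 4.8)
The plan is to recognize that $\psi$ is nothing but the map $\varphi_1$ of Theorem \ref{th:s->1} for a suitable choice of coefficients, and that its restriction $\psi|_{\F_{q^n}}$ is the corresponding map $\varphi_2$; the conclusion then follows at once. Concretely, I would take $\sigma$ to be the Frobenius $x\mapsto x^q$, so that on $\F_{q^{sn}}$ the automorphism $\sigma^{is}$ is $x\mapsto x^{q^{si}}=x^{\bar q^{i}}$, and $(\sigma^{s})^i$ runs over $\Gal(\F_{q^{sn}}/\F_{\bar q})$ as $i=0,\dots,n-1$ precisely because $\gcd(s,n)=1$. Thus $\bar q$-polynomials in $x$ of $\bar q$-degree at most $n-1$ are exactly the $\F_{\bar q}$-linear endomorphisms of $\F_{\bar q^n}=\F_{q^{sn}}$, which matches the form of $\varphi_1$.

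First I would expand the two twisted terms, writing $(b_1x)^{\bar q^{k+1}}=b_1^{\bar q^{k+1}}x^{\bar q^{k+1}}$ and $(\eta b_2x)^{\bar q^{k+2}}=(\eta b_2)^{\bar q^{k+2}}x^{\bar q^{k+2}}$, so that $\psi(x)=\sum_{i=0}^{n-1}f_i x^{\bar q^{i}}$ with $f_{k}=b_0$, $f_{k-1}=b_1$, $f_{k+1}=b_1^{\bar q^{k+1}}$, $f_{k-2}=\eta b_2$, $f_{k+2}=(\eta b_2)^{\bar q^{k+2}}$, and all remaining $f_i=0$. Since $\eta,b_0,b_1,b_2\in\F_{q^n}$ and the Frobenius preserves $\F_{q^n}$, every $f_i$ lies in $\F_{q^n}$, which is exactly the hypothesis imposed on the coefficients in Theorem \ref{th:s->1}. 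With these $f_i$, the $\F_{\bar q}$-linear map $\psi$ on $\F_{q^{sn}}$ is precisely $\varphi_1$.

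Next I would identify the restriction. Because each $f_i\in\F_{q^n}$ and $x^{\bar q^{i}}\in\F_{q^n}$ whenever $x\in\F_{q^n}$, the map $\psi$ sends $\F_{q^n}$ into itself, so $\psi|_{\F_{q^n}}$ is a well-defined $\F_q$-linear endomorphism of $\F_{q^n}$ given by the same expression $\sum_{i=0}^{n-1}f_i x^{\sigma^{is}}$; this is exactly $\varphi_2$. Theorem \ref{th:s->1} then asserts that the rank of the $\F_{\bar q}$-linear map $\varphi_1=\psi$ equals the rank of the $\F_q$-linear map $\varphi_2=\psi|_{\F_{q^n}}$, which is the claim.

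I do not expect a genuine obstacle here, since the entire analytic content is already packaged in Theorem \ref{th:s->1} (equivalently in \cite{neri_extending_2022,gow_galois_2009,sheekey_new_2016}); the lemma is purely a matter of casting the specific $\psi$ into that framework. The only points requiring care are bookkeeping: matching $\bar q^{i}=q^{si}$ with $\sigma^{is}$, verifying that the twisted terms do not move the coefficients out of $\F_{q^n}$, and checking that the restriction to $\F_{q^n}$ is well defined, so that the two maps in Theorem \ref{th:s->1} genuinely are $\psi$ and $\psi|_{\F_{q^n}}$.
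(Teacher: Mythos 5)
Your proposal is correct and follows exactly the paper's route: the paper also treats this lemma as a direct consequence of Theorem \ref{th:s->1}, the only content being the identification of $\psi$ with $\varphi_1$ and $\psi|_{\F_{q^n}}$ with $\varphi_2$ (after expanding the twisted terms and noting the coefficients stay in $\F_{q^n}$), which is precisely the bookkeeping you carry out.
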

	The proof of Lemma \ref{lem:s->1_our} is a direct consequence of Theorem \ref{th:s->1}. It tells us that we only have to prove Theorem \ref{th:main} for $s=1$ which will be handled in the rest part of this section.
	
	When $b_2=0$, the proof becomes quite simple.
	\begin{lemma}\label{lem:2_2}
		If $b_2=0$, then $\dim_{\F_q}\ker g(x)\leq2$.
	\end{lemma}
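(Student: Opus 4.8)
The plan is to transform $g$ into a $q$-polynomial of $q$-degree at most two by a Frobenius twist that leaves the kernel unchanged, and then to invoke the Gow--Quinlan bound of Theorem \ref{th:Gow}. By Lemma \ref{lem:s->1_our} it suffices to treat $s=1$, so setting $b_2=0$ in \eqref{eq:g} I would start from
\[
g(x)=b_0x^{q^{k}}+b_1x^{q^{k-1}}+b_1^{q^{k+1}}x^{q^{k+1}},
\]
whose surviving exponents are the three \emph{consecutive} powers $q^{k-1},q^{k},q^{k+1}$. The whole point of the argument is this consecutiveness: a suitable Frobenius twist will slide this window down to $\{q^0,q^1,q^2\}$, putting $g$ within reach of Theorem \ref{th:Gow}.

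Concretely, since $y\mapsto y^{q^{k+1}}$ is a bijection of $\fqn=\F_{q^{2k}}$, the maps $x\mapsto g(x)$ and $x\mapsto g(x)^{q^{k+1}}$ have identical kernels. I would therefore compute the reduced polynomial $\tilde g(x):=g(x)^{q^{k+1}}$ modulo $X^{q^{2k}}-X$. Using $b_0\in\F_{q^k}$ and $b_1\in\F_{q^{2k}}$, the three exponents $2k+1,2k,2k+2$ collapse to $1,0,2$ and the coefficients simplify, yielding
\[
\tilde g(x)=b_1^{q^{k+1}}x+b_0^{q}x^{q}+b_1^{q^{2}}x^{q^{2}},
\]
a $q$-polynomial of $q$-degree at most two.

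It then remains to note that for a nonzero form we have $(b_0,b_1)\neq(0,0)$, so $\tilde g$ is not the zero polynomial: if $b_1\neq0$ its leading coefficient $b_1^{q^2}$ is nonzero, while if $b_1=0$ then $b_0\neq0$ forces the coefficient $b_0^{q}$ to be nonzero. Applying Theorem \ref{th:Gow} to $\tilde g$ with $n=2k$ and the generator $\sigma:x\mapsto x^{q}$ gives $\dim_{\F_q}\ker\tilde g\leq2$, and since $\ker g=\ker\tilde g$ I conclude $\dim_{\F_q}\ker g\leq2$, as desired.

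I do not expect a serious obstacle here: once the three exponents are recognized as consecutive, the Frobenius twist to $q$-degree two and the direct appeal to Theorem \ref{th:Gow} are routine, which is exactly why the $b_2=0$ case is so much easier than the $b_2\neq0$ case. The only points requiring genuine care are the bookkeeping of exponents and coefficients modulo $X^{q^{2k}}-X$, and the separate check that $\tilde g\neq0$ in the degenerate subcase $b_1=0$, where the bound even improves to $\dim_{\F_q}\ker g\leq1$.
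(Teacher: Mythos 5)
Your proof is correct and essentially identical to the paper's: the paper writes $g(x)=\bigl(b_1^{q^{k+1}}x+b_0^{q^{k+1}}x^{q}+b_1^{q^2}x^{q^2}\bigr)^{q^{k-1}}$, which is exactly your twisted polynomial $\tilde g \equiv g^{q^{k+1}} \pmod{X^{q^{2k}}-X}$ read in the other direction, with the same kernel-preservation argument via bijectivity of Frobenius. The only cosmetic difference is the final step: the paper bounds the kernel by the elementary observation that a nonzero polynomial of degree at most $q^2$ has at most $q^2$ roots, whereas you invoke Theorem \ref{th:Gow}, which the paper itself presents as the generalization of precisely that degree bound.
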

	\begin{proof}
		Now 
		\begin{align*}
			g(x) &=b_0x^{q^{k}}+b_1x^{q^{k-1}}+(b_1x)^{q^{k+1}}\\
			&= \left(  b_0^{q^{k+1}}x^q+b_1^{q^{k+1}}x +(b_1x)^{q^{2}}     \right)^{q^{k-1}}. 
		\end{align*}
		As the degree of $b_0^{q^{k+1}}x^q+b_1^{q^{k+1}}x +(b_1x)^{q^{2}}$ is at most $q^2$, it has at most $q^2$ distinct roots which implies $\dim_{\F_q}\ker g(x)\leq2$.
	\end{proof}
	
	For $b_2\neq 0$, we will finish the proof by investigating the Dickson matrix associated with $g$. Depending on the value of $k$, we separate the proof into three cases.
	
	\subsection{Proof of Theorem \ref{th:main} with $k=3$}\label{subsec:k=3}
	\begin{proof}
		Suppose to the contrary that there are $b_1\in \mathbb{F}_{q^6} $ and $b_0,b_2\in\mathbb{F}_{q^3}$ such that $b_2\neq0$ and  $\rank(D( g))<4$. 
		
		Now
		$$D( g):=
		\begin{pmatrix}
			0& \eta b_2 & b_1&b_0&  b_1^{q^4}&(\eta b_2)^{q^5}\\ 
			\eta b_2&0& (\eta b_2)^q & b_1^q&b_0^q&  b_1^{q^5}\\
			b_1&(\eta b_2)^{q}&0& (\eta b_2)^{q^2} &b_1^{q^2}&b_0^{q^2}\\
			b_0&b_1^{q}&(\eta b_2)^{q^2}&0& (\eta b_2)^{q^3} &b_1^{q^3}\\
			b_1^{q^4}&b_0^{q}&b_1^{q^2}&(\eta b_2)^{q^3}&0&(\eta b_2)^{q^4} \\
			(\eta b_2)^{q^5}&b_1^{q^5}&b_0^{q^2}&b_1^{q^3}&(\eta b_2)^{q^4}&0 \\
		\end{pmatrix}.$$ 
		By  $\rank(D( g))<4$, the determinant of any $4\times 4$ submatrix of $D( g)$ is $0$. 
		Let $M_1$ be leading principal $4\times 4$ submatrix of $D( g)$.
		Then 
		\begin{align}
			\nonumber
			\det (M_1)
			=&(b_0(\eta b_2)^q)^2+(\eta b_2)^{2(q^2+1)}+b_1^{2(q+1)}\\
			&-2\left(b_0(\eta b_2)^{q^2+q+1}+b_0(\eta b_2)^qb_1^{q+1}+(\eta b_2)^{q^2+1}b_1^{q+1}\right)=0.
			\label{eq_2_4}
		\end{align}
		Let $M_2$ be the $4\times 4$ submatrix in $D( g)$ consisting of elements from the columns/rows with indices in $\{1,2,4,5\}$.
		Then 	$$M_2=
		\begin{pmatrix}
			0& \eta b_2 &b_0&b_1^{q^4}\\
			\eta b_2&0& b_1^q&b_0^q\\
			b_0&b_1^q&0& (\eta b_2)^{q^3}\\
			b_1^{q^4}&b_0^{q}& (\eta b_2)^{q^3}&0
		\end{pmatrix}$$
		and
		\begin{align}
			\nonumber
			\det (M_2)=&b_0^{2(q+1)}+(\eta b_2)^{2(q^3+1)}+b_1^{2(q^4+q)}\\
			&-2\left(b_0^{q+1}(\eta b_2)^{q^3+1}+(\eta b_2)^{q^3+1}b_1^{q^4+q}+b_0^{q+1}b_1^{q^4+q}\right)=0. 		\label{eq_2_5}
		\end{align}
		Depending on the value of $b_0$ and $b_1$, we separate the rest part into two cases.
		
		\medskip
		
		\noindent\textbf{Case 1:} $b_0=0$.
		By \eqref{eq_2_4} and \eqref{eq_2_5}, we get $$(\eta b_2)^{q^2+1}=b_1^{q+1},$$ and $$(\eta b_2)^{q^3+1}=b_1^{q^4+q},$$
		respectively. 
		As $\eta b_2\neq 0$, from the two equations above we derive $b_1\neq 0$ and $(\eta b_2)^{(q^2+1)(q^3-q^2+q)}=(\eta b_2)^{q^3+1}$, which implies $$ (\eta b_2)^{(q^3-1)(q^2-q+1)}=1.$$
		Since $b_2\in\mathbb{F}_{q^3}^*$, we have $\eta ^{\frac{q^6-1}{q+1}}=1$ which contradicts the assumption that $\eta$ is a non-square in $\F_{q^6}$.
		\medskip
		
		\noindent\textbf{Case 2:} $b_0\neq0$. By \eqref{eq_2_4},  we get
		\begin{equation}\label{eq:square_k=3}
			\left(b_0(\eta b_2)^q+(\eta b_2)^{q^2+1}-b_1^{q+1}\right)^2=4b_0(\eta b_2^{q^2+q+1}).
		\end{equation}
		Obviously, the left-hand side of the above equation is a square.
		However, as $\eta$ is not a square in $\F_{q^6}$ and $b_2\in \F_{q^3}$, the right-hand side of \eqref{eq:square_k=3} is a non-square in $\F_{q^6}$ which leads to a contradiction.
	\end{proof}
	
	\subsection{Proof of Theorem \ref{th:main} with $k=4$}\label{subsec:k=4}
	\begin{proof}
		Suppose to the contrary that there are $b_1\in \fqn $ and $b_0,b_2\in\mathbb{F}_{q^4}$ such that $b_2\neq0$ and  $\rank(D( g))<6$ which means the determinant of any $i\times i$ submatrix of $D( g)$ is $0$, where $5<i\leq 8$. 
		
		Let $M_1$ be the $6\times 6$ submatrix in $D( g)$ consisting of elements from the columns/rows with indices in $\{1,2,3,5,6,7\}$.
		Then 	$$M_1=
		\begin{pmatrix}
			0&0& \eta b_2 &b_0&b_1^{q^5}&(\eta b_2)^{q^6}\\ 
			0&0&0 & b_1^q&b_0^{q}&  b_1^{q^6}\\
			\eta b_2&0&0& (\eta b_2)^{q^2}&b_1^{q^2}&b_0^{q^2} \\
			b_0&b_1^{q}&(\eta b_2)^{q^2}&0&0&	(\eta b_2)^{q^4} \\
			b_1^{q^5}&b_0^{q}&b_1^{q^2}&0&0&0\\
			(\eta b_2)^{q^6}&  b_1^{q^6}&b_0^{q^2}&	(\eta b_2)^{q^4}&0&0
		\end{pmatrix},$$
		and by calculation, we get
		\begin{align*}
			\det(M_1)=& -A_1^2+2A_1B_1+2A_1C_1+2A_1D_1+2A_1E_1-2A_1F_1-2A_1G_1\\
			&-B_1^2-2B_1C_1-2B_1D_1-2B_1E_1+2B_1F_1+2B_1G_1-C_1^2\\
			&+2C_1D_1-2C_1E_1-2C_1F_1-2C_1G_1-D_1^2-2D_1E_1+2D_1F_1\\
			&+2D_1G_1-E_1^2+2E_1F_1+2E_1G_1-F_1^2-2F_1G_1-G_1^2\\
			&+4b_1^{q^6+q^5+q^2+q}(\eta b_2)^{q^4+1},
		\end{align*}
		where $A_1=b_0^{q^2+q+1}$, $B_1=b_0b_1^{q^6+q^2}$, $C_1=b_0^q(\eta b_2)^{q^4+1}$, $D_1=b_0^q(\eta b_2)^{q^6+q^2}$, $E_1=b_0^{q^2}b_1^{q^5+q}$, $F_1=b_1^{q^2+q}(\eta b_2)^{q^6}$, $G_1=b_1^{q^5+q^6}(\eta b_2)^{q^2}$.
		
		By simplification and the assumption on the rank of $D( g)$, we get	
		\begin{align}
			\nonumber
			\det(M_1)=&4\left(b_1^{q^6+q^5+q^2+q}(\eta b_2)^{q^4+1}+C_1D_1-C_1F_1-C_1G_1\right)\\
			\label{eq:n_8}
			&-(A_1-B_1-C_1-D_1-E_1+F_1+G_1)^2=0,
		\end{align}

		Let $M_2$ be leading principal $6\times 6$ submatrix of $D( g)$.
		By calculation, we get
		\begin{align*}
			\det(M_2)=&-A_2^2-2A_2B_2+2A_2C_2-2A_2D_2+2A_2E_2\\
			&-B_2^2+2B_2C_2-2B_2D_2+2B_2E_2-C_2^2+2C_2D_2\\
			&-2C_2E_2-D_2^2-2D_2E_2-E_2^2+4A_2B_2D_2/C_2,
		\end{align*}
		where $A_2=b_0b_1^{q^2}(\eta b_2)^q$, $B_2=b_0^qb_1(\eta b_2)^{q^2}$, $C_2=(b_1)^{q^2+q+1}$, $D_2=b_1^q(\eta b_2)^{q^3+1}$, $E_2=b_1^{q^5}(\eta b_2)^{q^2+q}$. Here, $A_2B_2D_2/C_2$ should be interpreted as $b_0^{1+q}(\eta b_2)^{1+q+q^2+q^3}$ which actually does not depend on the value of $b_1$.
		
		Furthermore, by simplification and the assumption on the rank of $D( g)$, we obtain
		\begin{align}
			\nonumber
			\det(M_2)=&4\left(A_2B_2D_2/C_2-A_2D_2-B_2D_2+C_2D_2\right)\\
			\label{eq:n_8,1}
			&-(A_2+B_2-C_2-D_2-E_2)^2=0.
		\end{align}

		Depending on the value of $b_0$, we separate the rest part into two cases.
		
		\medskip
		
		\noindent\textbf{Case 1:} $b_0=0$.
		Let $w$ denote a primitive element in $\fqn$.
		
		If $b_1= 0$, then 
		\[ g(x)=\eta b_2x^{q^{s(k-2)}}+(\eta b_2x)^{q^{s(k+2)}},\]
		which implies
		\[\det(D( g))=\left((\eta b_2)^{q^4+1}-(\eta b_2)^{q^2(q^4+1)}\right)^{2(q+1)}=0.\]
		
		Hence $(\eta b_2)^{(q^2-1)(q^4+1)}=1$, which implies $\eta b_2=w^{(q^2+1)c_0}$ for some integer $c_0>0$.
		By the assumption that $\eta$ is a non-square, we get a contradiction.
		
		If $b_1\neq 0$,	by \eqref{eq:n_8} and $b_0=0$, we get \begin{align}\label{eq:n_8_0}
			\det(M_1)=4b_1^{q^6+q^5+q^2+q}(\eta b_2)^{q^4+1}-\left(b_1^{q^2+q}(\eta b_2)^{q^6}+b_1^{q^6+q^5}(\eta b_2)^{q^2}\right)^2=0.
		\end{align}
		Through direct analysis we get
		\begin{equation}\label{eq:direct}
			b_1^{q^2+q}(\eta b_2)^{q^6}+b_1^{q^6+q^5}(\eta b_2)^{q^2}\in\F_{q^4}.
		\end{equation}
		By looking at the powers, it is easy to see that
		$$4b_1^{q^6+q^5+q^2+q}(\eta b_2)^{q^4+1}=w^{4c_1+2},$$
		for some $0<c_1<q^n/4$.
		Together with \eqref{eq:n_8_0}, we obtain $b_1^{q^2+q}(\eta b_2)^{q^6}+b_1^{q^6+q^5}(\eta b_2)^{q^2}$ is not a square in $\F_{q^8}$ which contradicts \eqref{eq:direct}. 
		
		\medskip
		
		\noindent\textbf{Case 2:} $b_0\neq0$.
		
		Let $Y=b_1^{q+1}-b_0(\eta b_2)^{q}$.
		Let $U=A_1-B_1-C_1-D_1-E_1+F_1+G_1$.
		By  \eqref{eq:n_8} and $b_0\in\F_{q^4}$, we get 
		\begin{equation}\label{eq:1_1}
			U^2=4(\eta b_2)^{q^4+1}(b_1^{q+1}-b_0(\eta b_2)^{q})^{q^5+q}=	4(\eta b_2)^{q^4+1}Y^{q^5+q},
		\end{equation}
		and $U\in\F_{q^4}$.
		
		Let $W=A_2+B_2-C_2-D_2-E_2$. By \eqref{eq:n_8,1}, we get
		\begin{equation}\label{eq:1_2}
			W^2=4(\eta b_2)^{q^3+1}(b_1^{q+1}-b_0(\eta b_2)^{q})^{q+1}=4(\eta b_2)^{q^3+1}Y^{q+1}.
		\end{equation}
		
		As $\eta$ is not a square in $\fqn$ and $b_0,b_2\in\F_{q^4}^*$,  $b_0(\eta b_2)^{q} $ is also not a square in $\F_{q^8}$. It is clear that $b_1^{q+1}$ is a square in $\F_{q^8}$. Hence $b_1^{q+1}$ is not equal to $b_0(\eta b_2)^{q} $ and $Y\neq 0$. 
		Then $W\neq 0$ and $U\neq 0$. 
		By \eqref{eq:1_1} and \eqref{eq:1_2}, we obtain
		\begin{equation}	\label{eq:2_0}
			\frac{W^{q^4+1}}{U^{q^3+1}}=\frac{	4(\eta b_2)^{(q^3+1)(q^4+1)/2}Y^{(q+1)(q^4+1)/2}}{	4(\eta b_2)^{(q^4+1)(q^3+1)/2}Y^{q(q^4+1)(q^3+1)/2}}=1.
		\end{equation}
		
		Let $w$ be a primitive element in $\F_{q^8}$. By  $U\in\F_{q^4}^*$, we get \(U=w^{k_1(q^4+1)}\) for some integer $k_1$. Moreover, by \eqref{eq:2_0} , we get \(W=w^{k_2(q+1)}\) and 
		\begin{equation}\label{eq:k_1,2}
			k_1(q^2-q+1)\equiv k_2\mod(q^2+1)(q-1),
		\end{equation} for some integer $k_2$.
		
		As $\eta$ is not a square in $\F_{q^8}$ and $b_2\in\F_{q^4}$, there exists an odd integer $k_0$ such that $\eta b_2=w^{k_0}$. By \eqref{eq:1_1}, we obtain 
		\begin{equation}\label{eq:2_1}
			2Y=w^{2k_1q^3-k_0q^3}w^{k_3(q^4-1)},
		\end{equation}
		for some integer $k_3$.
		By \eqref{eq:1_2}, we obtain 
		\begin{equation}\label{eq:2_2}
			2Y=w^{2k_2-k_0(q^2-q+1)}w^{k_4(q^4+1)(q^2+1)(q-1)},
		\end{equation}
		for some integer $k_4$.
		By \eqref{eq:2_1} and \eqref{eq:2_2}, we get 
		$$2k_1q^3-k_0q^3+k_3(q^4-1)=2k_2-k_0(q^2-q+1)+k_4(q^4+1)(q^2+1)(q-1)+i(q^8-1),$$
		for some integer $i$. Then  
		\begin{equation}\label{eq:2_3}
			\left(-k_0+k_3(q+1)-k_4(q^4+1)-i(q^4+1)(q+1)\right)(q^2+1)(q-1)+2(k_1q^3-k_2)=0,
		\end{equation}
		By \eqref{eq:k_1,2},  $k_1q^3-k_2\equiv k_1q^3- k_1(q^2-q+1)\equiv k_1(q^2+1)(q-1) \equiv 0 \pmod{(q^2+1)(q-1)}$. Thus there is an integer $j$ such that
		\[
		k_1q^3-k_2=j(q^2+1)(q-1).
		\]
		
		Hence \eqref{eq:2_3} implies 
		\[2j+k_3(q+1)-k_4(q^4+1)-i(q^4+1)(q+1)=k_0.\]
		Then the left-hand side of the equation above is even and its right-hand side $k_0$ is odd which leads to a contradiction.
	\end{proof}

	\subsection{Proof of Theorem \ref{th:main} with $k=5$}\label{subsec:k=5}
	\begin{proof}
		Suppose to the contrary that there are $b_1\in \fqn $ and $b_0,b_2\in\mathbb{F}_{q^4}$ such that $b_2\neq0$ and  $\rank(D( g))<8$. The determinant of any 	 $i\times i$ submatrix of $D( g)$ is $0$, where $7<i\leq 10$. 
		
		Let $M_1$ be the leading principal $8\times 8$ submatrix of $D( g)$. 	By calculation, we get
		\begin{align*}
			&\det(M_1)\\
			=& A_1^2-2A_1B_1-2A_1C_1-2A_1D_1-2A_1E_1+2A_1F_1+2A_1G_1-2A_1H_1+2A_1I_1-2A_1J_1\\
			&+B_1^2+2B_1C_1-2B_1D_1+2B_1E_1-2B_1F_1-2B_1G_1+2B_1H_1-2B_1I_1+2B_1J_1+C_1^2\\
			&-2C_1D_1+2C_1E_1-2C_1F_1-2C_1G_1+2C_1H_1-2C_1I_1+2C_1J_1+D_1^2-2D_1E_1+2D_1F_1\\
			&+2D_1G_1-2D_1H_1+2D_1I_1+2D_1J_1+E_1^2-2E_1F_1-2E_1G_1+2E_1H_1-2E_1I_1+2E_1J_1+F_1^2\\
			&+2F_1G_1-2F_1H_1+2F_1I_1-2F_1J_1+G_1^2-2G_1H_1+2G_1I_1-2G_1J_1+H_1^2-2H_1I_1-2H_1J_1\\
			&+I_1^2-2I_1J_1+Jv^2+4(\eta b_2)^{q^4+q^3+q^2+q+1}(b_1^{q^7+q^6}(\eta b_2)^{q^2}-b_0b_1^{q^7+q^2}-b_0^{q^2}b_1^{q^6+q^1}),
		\end{align*}
		where $A_1=b_0^{q^2+1}(\eta b_2)^{q^3+q}$, $B_1=b_0b_1^{q^3+q^2}(\eta b_2)^{q}$, $C_1=b_0^qb_1^{q^3+1}(\eta b_2)^{q^2}$, $D_1=b_0^q(\eta b_2)^{q^4+q^2+1}$, $E_1=b_0^{q^2}b_1^{q+1}(\eta b_2)^{q^3}$, $F_1=b_1^{q^3+q^2+q+1}$, $G_1=b_1^{q^7+1}(\eta b_2)^{q^2+q^3}$, $H_1=b_1^{q+q^2}(\eta b_2)^{q^4+1}$, $I_1=b_1^{q^6+q^3}(\eta b_2)^{q^2+q}$, $J_1=(\eta b_2)^{q+q^2+q^3+q^7}$. It can be further simplified as
		\begin{align}
			\nonumber
			\det(M_1) = &(A_1-B_1-C_1+D_1-E_1+F_1+G_1-H_1+I_1-J_1)^2\\
			\label{eq:n_10}
			&-4(A_1D_1-D_1J_1+H_1J_1)\\
			\nonumber
			&-4(\eta b_2)^{q^4+q^3+q^2+q+1}\left(b_1^{q^7+q^6}(\eta b_2)^{q^2}
			-b_0b_1^{q^7+q^2}-b_0^{q^2}b_1^{q^6+q^1}\right)=0	
		\end{align}
		
		By \eqref{eq:n_10}, we get 
		\begin{equation}\label{eq:square_10}
			\begin{aligned}
				&(A_1-B_1-C_1+D_1-E_1+F_1+G_1-H_1+I_1-J_1)^2\\
				=&4(\eta b_2)^{q^4+q^3+q^2+q+1}\Delta,
			\end{aligned}
		\end{equation}
		where
		\[
		\Delta = b_1^{q^7+q^6}(\eta b_2)^{q^2}-b_0b_1^{q^7+q^2}-b_0^{q^2}b_1^{q^6+q}+b_0^{q^2+q+1}-b_0^q(\eta b_2)^{q^2+q^7}+b_1^{q^2+q}(\eta b_2)^{q^7}.
		\]

		Depending on the value of $\Delta$, we continue with the proof in two different cases.
		
		\medskip
		
		\noindent\textbf{Case 1:}  $\Delta\neq 0$.	Obviously, the left-hand side of \eqref{eq:square_10} is a square in $\F_{q^{10}}$.
		By $b_0\in\mathbb{F}_{q^5}$, we get \(4\Delta\in\mathbb{F}_{q^5}^*\), which is also a square in $\F_{q^{10}}$.
		
		As $\eta$ is not a square in $\fqn$, by $b_2\in\mathbb{F}_{q^5}^*$ and $q$ is odd, we obtain that \((\eta b_2)^{q^4+q^3+q^2+q+1}\) is a non-square which leads to a contradiction to \eqref{eq:square_10}.
		
		\medskip
		
		\noindent\textbf{Case 2:}  $\Delta=0$. Now
		by \eqref{eq:square_10} we have 
		\begin{equation}\label{eq:equal_0}
			A_1-B_1-C_1+D_1-E_1+F_1+G_1-H_1+I_1-J_1=0,
		\end{equation}
		which implies 
		
		\begin{equation}\label{eq_10_2^*}
			\begin{aligned}
				&(A_1-B_1-C_1-D_1-E_1+F_1+G_1+H_1+I_1-J_1)^2\\
				=&4(H_1-D_1)(A_1-B_1-C_1-E_1+F_1+G_1+I_1-J_1)\\
				=&4(\eta b_2)^{q^4+1}\left(b_1^{q^2+q}-b_0^{q}(\eta b_2)^{q^2}\right)\left(b_0^{q^2+1}(\eta b_2)^{q^3+q}-b_0b_1^{q^3+q^2}(\eta b_2)^{q}+b_1^{q^6+q^3}(\eta b_2)^{q^2+q}-(\eta b_2)^{q+q^2+q^3+q^7}\right)\\
				+&4(\eta b_2)^{q^4+1}b_1\left((\eta b_2)^{q^2}-b_1^{q^2+q}\right)U^q
				\\
				=&4(\eta b_2)^{q^4+1}\left(b_0^{q+1}b_1^{q^3+q^2}(\eta b_2)^{q^2+q}+b_0^{q^2+1}b_1^{q^2+q}(\eta b_2)^{q^3+q}-b_0b_1^{q^3+2q^2+q}(\eta b_2)^q+b_1^{q^6+q^3+q^2+q}(\eta b_2)^{q^2+q}\right)\\
				-&4(\eta b_2)^{q^4+q^2+q+1}\left(b_0^{q^2+q+1}(\eta b_2)^{q^3}-b_0^q(\eta b_2)^{q^7+q^3+q^2}+b_1^{q^2+q}(\eta b_2)^{q^7+q^3}+b_0^qb_1^{q^6+q^3}(\eta b_2)^{q^2}\right)\\
				+&4(\eta b_2)^{q^4+1}\left(b_1(\eta b_2)^{q^2}-b_1^{q^2+q+1}\right)U^q,
			\end{aligned}
		\end{equation}
		where 
		\begin{equation}\label{eq:def_U}
			U=b_0b_1^{q^2}(\eta b_2)^{q}+b_0^{q}b_1(\eta b_2)^{q^2}-b_1^{q^2+q+1}-b_1^{q^6}(\eta b_2)^{q^2+q}.
		\end{equation}
		By $\Delta=0$, we get 
		\[b_0^{q^2+q+1}(\eta b_2)^{q^3}-b_0^q(\eta b_2)^{q^7+q^3+q^2}+b_1^{q^2+q}(\eta b_2)^{q^7+q^3}=(\eta b_2)^{q^3}\left(b_0b_1^{q^7+q^2}+b_0^{q^2}b_1^{q^6+q}-b_1^{q^7+q^6}(\eta b_2)^{q^2}\right).\]
		Together with \eqref{eq_10_2^*}, then 
		\begin{equation}\label{eq_10_2}
			\begin{aligned}
				&(A_1-B_1-C_1-D_1-E_1+F_1+G_1+H_1+I_1-J_1)^2\\
				=&4(\eta b_2)^{q^4+q+1}\left(b_0b_1^{q^2}(b_0^qb_1^{q^3}(\eta b_2)^{q^2}+b_0^{q^2}b_1^q(\eta b_2)^{q^3})-b_1^{q^3+q^2+q}(b_0b_1^{q^2}-b_1^{q^6}(\eta b_2)^{q^2})\right)\\
				-&4(\eta b_2)^{q^4+q^2+q+1}\left((\eta b_2)^{q^3}(b_0b_1^{q^7+q^2}+b_0^{q^2}b_1^{q^6+q}-b_1^{q^7+q^6}(\eta b_2)^{q^2})+b_0^qb_1^{q^6+q^3}(\eta b_2)^{q^2}\right)\\
				+&4(\eta b_2)^{q^4+1}(b_1(\eta b_2)^{q^2}-b_1^{q^2+q+1})U^q\\
				=&4(\eta b_2)^{q^4+1}U^{q+1},
			\end{aligned}
		\end{equation}
		where $	U=b_0b_1^{q^2}(\eta b_2)^{q}+b_0^{q}b_1(\eta b_2)^{q^2}-b_1^{q^2+q+1}-b_1^{q^6}(\eta b_2)^{q^2+q}.$
		
		Depending on the value of $b_0$ and $b_1$, we consider two cases.
		\medskip
		
		\textbf{Case 2.1:} $b_0=b_1=0$.
		
		Let \(M_2\) denote the \((n - 2)\times(n - 2)\) matrix obtained from \(D( g)\) after removing its first \(2\) columns and last \(2\) rows. Then $$M_2=
		\begin{pmatrix}
			0& \eta b_2 &0&0& 0&(\eta b_2)^{q^7}&0&0\\ 
			0&0&(\eta b_2)^q &0&0& 0&(\eta b_2)^{q^8}&0\\
			0&0&0& (\eta b_2)^{q^2} &0&0&0&(\eta b_2)^{q^9}\\
			0&0&0&0& (\eta b_2)^{q^3} &0&0&0\\
			
			0&0&0&0&0& (\eta b_2)^{q^4} &0&0\\
			
			(\eta b_2)^{q^2}&0&0&0&0&0&(\eta b_2)^{q^5}&0 \\
			0&(\eta b_2)^{q^3}&0&0&0&0&0&(\eta b_2)^{q^6} \\
			0&0&(\eta b_2)^{q^4}&0&0&0&0&0
		\end{pmatrix}.$$ 
		By calculation, we get 
		$$\det(M_2)=-(\eta b_2)^{1+2q^2+q^3+2q^4+q^6+q^8}=0,$$  which contradicts $\eta b_2\neq 0$.
		\medskip
		
		\textbf{Case 2.2:} At least one of \(b_0\) and \(b_1\) is non-zero. 
		
		It is straightforward to show that $U\neq 0$; otherwise, by \eqref {eq_10_2}, we get
		\[
		A_1-B_1-C_1-D_1-E_1+F_1+G_1+H_1+I_1-J_1=0
		\]
		from which together with \eqref{eq:equal_0} we derive $D_1=H_1$, i.e.,
		$$b_0^{q}(\eta b_2)^{q^2}=b_1^{q^2+q},$$
		which contradicts the assumption that $\eta$ is a non-square in $\F_{q^{10}}$.
		
		Let $M_2$ be the $8\times 8$ submatrix in $D( g)$ consisting of the elements in  columns/rows with indices in $\{1,2,3,4,6,7,8,9\}$. Then
		$$	M_3=
		\begin{pmatrix}
			0&0&0& \eta b_2 &b_0&  b_1^{q^6}&(\eta b_2)^{q^7}&0\\ 
			0&0&0&0 & b_1^q&b_0^q&  b_1^{q^7}&(\eta b_2)^{q^8}\\
			0&0&0&0& (\eta b_2)^{q^2} &b_1^{q^2}&b_0^{q^2}&b_1^{q^8}\\
			\eta b_2&0&0&0&0& (\eta b_2)^{q^3} &b_1^{q^3}&b_0^{q^3}\\
			
			b_0&b_1^{q}&(\eta b_2)^{q^2}&0&0&0&0&(\eta b_2)^{q^5} \\
			b_1^{q^6}&b_0^{q}&b_1^{q^2}&(\eta b_2)^{q^3}&0&0&0&0\\
			(\eta b_2)^{q^7} &b_1^{q^7}&b_0^{q^2}&b_1^{q^3}&0&0&0&0\\
			0&(\eta b_2)^{q^8} &b_1^{q^8}&b_0^{q^3}&(\eta b_2)^{q^5}&0&0&0
		\end{pmatrix}.$$ 
		By calculation, we get
		\begin{equation}\label{eq_10_4}
			\begin{aligned}
				& \det(M_3)\\
				=&(A_2-B_2-C_2-D_2+E_2+F_2-G_2-H_2-I_2+J_2+K_1+K_2-K_3+K_4-K_5+K_6)^2\\
				&-4(\eta b_2)^{q^5+1}\left(b_0^qb_1^{q^3}(\eta b_2)^{q^2}+b_0^{q^2}b_1^{q}(\eta b_2)^{q^3}-b_1^{q^3+q^2+q}-b_1^{q^7}(\eta b_2)^{q^3+q^2}\right)^{q^5+1},
			\end{aligned}	
		\end{equation}
		where $A_2=b_0^{q^3+q^2+q+1}$, $B_2=b_0^{q+1}b_1^{q^8+q^3}$, $C_2=b_0^{q^3+1}(\eta b_2)^{q^8+q^3}$, $D_2=b_0^{q^3+1}b_1^{q^7+q^2}$,  $E_2=b_0b_1^{q^3+q^2}(\eta b_2)^{q^8}$, $F_2=b_0b_1^{q^8+q^7}(\eta b_2)^{q^3}$, $G_2=b_0^{q^2+q}(\eta b_2)^{q^5+1}$,
		$H_2=b_0^{q^3+q}(\eta b_2)^{q^7+q^2}$,
		$I_2=b_0^{q^3+q^2}b_1^{q^6+q}$,
		$J_2=b_0^{q^3}b_1^{q^2+q}(\eta b_2)^{q^7}$,
		$K_1=b_0^{q^3}b_1^{q^7+q^6}(\eta b_2)^{q^2}$,
		$K_2=b_1^{q^8+q^6+q^3+q}$, $K_3=b_1^{q^8+q}(\eta b_2)^{q^7+q^3}$, $K_4=b_1^{q^7+q^2}(\eta b_2)^{q^5+1}$, $K_5=b_1^{q^6+q^3}(\eta b_2)^{q^8+q^2}$, $K_6=(\eta b_2)^{q^8+q^7+q^3+q^2}$.
		
		Set 
		$$W=A_2-B_2-C_2-D_2+E_2+F_2-G_2-H_2-I_2+J_2+K_1+K_2-K_3+K_4-K_5+K_6.$$
		As $\det(M_3)=0$, from by \eqref{eq_10_4} it follows that
		\begin{equation}\label{eq_10_6}
			W^2 = 4(\eta b_2)^{q^5+1}U^{q^6+q},
		\end{equation}
		in which $U$ is defined by \eqref{eq:def_U}.
		
		On the other hand, let $V=A_1-B_1-C_1-D_1-E_1+F_1+G_1+H_1+I_1-J_1$.
		By \eqref{eq_10_2}, we get 
		\begin{equation}
			\label{eq_10_5}
			V^2=4(\eta b_2)^{q^4+1}U^{q+1}.
		\end{equation}
		
		Together with $U\neq 0$ and \eqref{eq_10_6}, we get $V\neq 0$ and $W\in\mathbb{F}_{q^5}^*$.
		By \eqref{eq_10_5} and \eqref{eq_10_6}, we obtain
		\begin{equation}\label{eq_10_7}
			\frac{W^{q^4+1}}{V^{q^5+1}}=\frac{4(\eta b_2)^{(q^5+1)\frac{q^4+1}{2}}U^{q(q^5+1)\frac{q^4+1}{2}}}{4(\eta b_2)^{(q^4+1)\frac{q^5+1}{2}}U^{(q+1)\frac{q^5+1}{2}}}=1.
		\end{equation} 
		Let $w$ be a primitive element in $\F_{q^{10}}$. By $W\in\F_{q^5}^*$, we can set \(V=w^{k_1}\) and \(W=w^{k_2(q^5+1)}\) for some integers $k_1$ and $k_2$. Moreover, by \eqref{eq_10_7},
		\begin{equation}\label{eq_10_8}
			k_1 \equiv k_2q^4(q+1)\mod(q^5-1).
		\end{equation}
		As $\eta$ is not a square in $\F_{q^{10}}$ and $b_2\in\F_{q^5}$, there exists an odd integer $k_0$ such that $\eta b_2=w^{k_0}$. By \eqref{eq_10_5}, we obtain 
		\begin{equation}\label{eq_10_9}
			2U=w^{\frac{2k_1-k_0(q^4+1)}{q+1}}w^{\frac{k_3(q^{10}-1)}{q+1}},
		\end{equation}
		for some integer $k_3$.
		By \eqref{eq_10_6}, we obtain 
		\begin{equation}\label{eq_10_10}
			2U=w^{2k_2q^4-k_0q^4}w^{k_4(q^5-1)},
		\end{equation}
		for some integer $k_4$.
		By \eqref{eq_10_9} and \eqref{eq_10_10}, we get 
		$$2k_1-k_0(q^4+1)+k_3(q^{10}-1)=(q+1)\left(2k_2q^4-k_0q^4+k_4(q^5-1)+i(q^{10}-1)\right),$$
		for some integer $i$. Then  
		\begin{equation}\label{eq_10_11}
			\left(k_0-k_4(q+1)+(q^5+1)(k_3-i(q+1))\right)(q^5-1)+2(k_1-k_2q^4(q+1))=0,
		\end{equation}
		By \eqref{eq_10_8}, there is an integer $j$ such that $k_1-k_2q^4(q+1)=j(q^5-1)$.
		Hence \eqref{eq_10_11} implies 
		\[-k_4(q+1)+(q^5+1)(k_3-i(q+1))+2j=-k_0.\]
		As $q$ is odd,  the left-hand side of the equation above is even which contradicts to $2\nmid k_0$.
	\end{proof}
	
	\section{Equivalence}\label{sec:equi}
	This section consists of two main results. First, we show that the $(n-2)$-codes constructed in Theorem \ref{th:main} are new. Second, we completely determine the equivalence between different members in the family constructed in  Theorem \ref{th:main}.
	
	For even $n>4$ and $d=n-2$, according to the summary in Section \ref{sec:intro}, there is only one known construction of maximum symmetric $d$-codes defined in \eqref{eq:S_1}. Therefore, we only have to prove the following theorem to show that $\cT_{n,t,\eta}$ defined in Theorem \ref{th:main} is new.
	
	\begin{theorem}\label{th:ineq}
		Let $k$ be an integer larger than $2$ and $n=2k$. Let $q$ be an odd prime power and $\eta$ a non-square element in $\fqn$.
		For any $s,t$ satisfying $\gcd(s,n)=\gcd(t,n)=1$,
		$\cS_{n,n-2,s}$ is inequivalent to $\cT_{n,t,\eta}$.
	\end{theorem}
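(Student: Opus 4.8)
The plan is to work entirely inside the linearized-polynomial model of Section~\ref{sec:pre}. Since both codes are $\fq$-linear, an equivalence $\cS_{n,n-2,s}\cong\cT_{n,t,\eta}$ may be taken with $a=1$ and $\sS_0=0$; translating the congruence $C\mapsto P^TC^\sigma P$ into the ring of self-adjoint linearized polynomials, it becomes a map of the form $f\mapsto \hat L\circ f^\sigma\circ L$, where $L=\sum_{i=0}^{n-1}\ell_iX^{q^i}$ is the invertible linearized polynomial attached to $P$, $\sigma\in\Aut(\fq)$ acts on the coefficients, and $\hat L$ is the adjoint. Such a map preserves ranks, so it must carry $\cS_{n,n-2,s}$ bijectively onto $\cT_{n,t,\eta}$. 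Throughout I would keep track of the two symmetric exponent supports $E_{\cS}=\{0,s,n-s\}$ and $E_{\cT}=\{tk,t(k\pm1),t(k\pm2)\}\pmod n$; the whole argument turns on the fact that these two sets have incompatible shapes.

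First I would pin down the shape of $L$. Feeding the one-parameter monomial family $f=b_0X\in\cS_{n,n-2,s}$ (with $b_0\in\fqn$) into $f\mapsto\hat L\circ f^\sigma\circ L$ produces, for every $b_0$, an element of $\cT_{n,t,\eta}$. Expanding $\hat L(b_0^\sigma L(X))$ shows that the coefficient of $X^{q^m}$ is the $\fq$-linearized polynomial $\sum_i (\ell_i\ell_{i+m})^{q^{n-i}}(b_0^\sigma)^{q^{n-i}}$ in $b_0^\sigma$. For every exponent $m\notin E_{\cT}$ this coefficient must vanish identically in $b_0$, which forces $\ell_i\ell_{i+m}=0$ for all $i$. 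Hence the support $S=\{i:\ell_i\neq0\}$ satisfies $S-S\subseteq E_{\cT}\cup\{0\}\pmod n$, and since $E_{\cT}$ has only five nonzero elements this already gives $|S|\le 6$. Feeding the remaining two-term generators $b_1(X^{q^s}+\,\cdots)$ forward (and, symmetrically, running the computation for the inverse map against $E_{\cS}$) produces further relations among the surviving $\ell_i$ and tightly constrains their exponents, cutting $L$ down to a short normal form.

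With $L$ in this normal form, the remaining step is to impose the full membership conditions: the images of the generators of $\cS_{n,n-2,s}$ must satisfy the two defining restrictions of $\cT_{n,t,\eta}$, namely the self-adjoint pairing between the $X^{q^{t(k\pm j)}}$ coefficients and, crucially, the subfield constraints $b_0,b_2\in\fqk$ together with the non-square twist $\eta$ on the $b_2$-term. The contradiction I expect to extract is of exactly the type already used in Section~\ref{sec:proof_main}: once the image coefficients are written out, the subfield and self-adjointness relations force a fixed element of $\fqn$ to be simultaneously a square and an odd power of $\eta$ times a norm, and the non-squareness of $\eta$ makes this impossible. The main obstacle is precisely this last stage, namely carrying the finitely many surviving shapes of $L$ (and the admissible choices of $\sigma$ and of the exponents permitted by $S-S\subseteq E_{\cT}\cup\{0\}$) through the coefficient bookkeeping, which, as in the case analysis for $k=3,4,5$, has no apparent uniform pattern.

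As an independent check, and a possible shortcut, I would note that congruence preserves the maximal $\fq$-dimension of a subspace of \emph{nonsingular} symmetric forms. The code $\cS_{n,n-2,s}$ contains the field family $\{\Tr(b_0xy):b_0\in\fqn\}$, an $n$-dimensional space all of whose nonzero members have rank $n$, so any equivalence would force $\cT_{n,t,\eta}$ to contain such an $n$-dimensional nonsingular subspace as well. Ruling this out directly, once again through the $\eta$-obstruction applied to the explicit form of $g$ in \eqref{eq:g}, would establish the inequivalence without having to determine $L$ completely.
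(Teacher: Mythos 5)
Your opening computation is already the paper's entire proof---the gap is that you never apply it at the one exponent that matters. You correctly expand $\hat L\bigl(b_0^\sigma L(X)\bigr)$ and observe that for every $m\notin E_{\cT}$ the coefficient $\sum_i(\ell_i\ell_{i+m})^{q^{n-i}}(b_0^\sigma)^{q^{n-i}}$ must vanish identically in $b_0$, forcing $\ell_i\ell_{i+m}=0$ for all $i$. But $m=0$ is itself such an exponent: since $\gcd(t,n)=1$ and $1\le k-2\le k+2\le n-1$ (because $k>2$), none of $tk,\,t(k\pm1),\,t(k\pm2)$ is $\equiv 0\pmod n$, so $0\notin E_{\cT}$. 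Applying your own vanishing condition at $m=0$ gives $\ell_i^2=0$, hence $\ell_i=0$ for every $i$, so $L$ is the zero map, contradicting invertibility---and the theorem is proved. This is exactly what the paper does: it sets $c_1=0$, computes the $q$-degree-zero coefficient $\sum_i (c_0^{p^\ell})^{q^{n-i}}a_i^{2q^{n-i}}$ of the image, and notes that every element of $\cT_{n,t,\eta}$ has zero coefficient there, killing all $a_i$ at once. By writing $S-S\subseteq E_{\cT}\cup\{0\}$ you silently exempted the diagonal pairs $\ell_i\ell_i$ from your own constraint, and that single oversight is what pushes you into the long continuation (normal form for $L$, forward images of the $b_1$-generators, subfield and $\eta$-obstruction bookkeeping), whose decisive contradiction you only say you ``expect'' to extract and admit has no uniform pattern. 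As written, that continuation is a sketch, not a proof.

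The fallback shortcut has the same status: equivalence does preserve the maximal $\fq$-dimension of a subspace of nonsingular forms, but you would then have to prove that $\cT_{n,t,\eta}$ contains \emph{no} $n$-dimensional nonsingular subspace, which is left entirely open and is not obviously easier than the rank analysis of Section~\ref{sec:proof_main} (for instance, the subspace given by the $b_1$-terms alone is only guaranteed to have rank at least $n-2$, so ruling out nonsingular subspaces genuinely requires work). In summary: right framework, right key computation, same approach as the paper, but the proof as proposed is incomplete; the one-line application of your vanishing condition at $m=0$ both closes it and recovers the paper's argument verbatim.
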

	\begin{proof}
		For any element $T(x,y) \in \cT_{n,t,\eta}$, we have
		\begin{align*}
			T(x,y)&=\Tr\left(b_0x^{q^{k}}y+b_1\left(x^{q^{t(k-1)}}y+y^{q^{t(k-1)}}x\right)+\eta b_2\left(x^{q^{ t(k-2)}}y+y^{q^{t(k-2)}}x\right)\right)\\
			&=\Tr(yg(x)),
		\end{align*}
		for some $b_0,b_2\in \F_{q^k}$ and $b_1\in \F_{q^{2k}}$, 
		where 
		\begin{equation}\label{eq:def_g}
			g(x)=b_0x^{q^{tk}}+b_1x^{q^{t(k-1)}}+(b_1x)^{q^{t(k+1)}}+\eta b_2x^{q^{t(k-2)}}+(\eta b_2x)^{q^{t(k+2)}}.
		\end{equation}
		
		For any element $S(x,y)\in \cS_{n,n-2,s}$, we have
		\[\Tr\left(c_0xy+c_1\left(x^{q^{s}}y+y^{q^{s}}x\right)\right)=\Tr\left(yh(x)\right),\]
		where $h(x)=c_0x+c_1x^{q^{s}}+(c_1x)^{q^{n-s}}$.
		
		Suppose to the contrary that $\cT_{n,t,\eta}$ and $\cS_{n,n-2,s}$ are equivalent. As they are both $\F_q$-linear, we may assume that $a=1$ in \eqref{eq:phi}.
		Suppose that $q=p^m$ where $p$ is a prime number. 
		By definition, there exists a
		$q$-polynomial $f=\sum_{i=0}^{n-1}a_i X^{q^i}$ which defines a permutation map on $\fqn$ and an integer $\ell\in\{0,\dots,m-1\}$ such that
		\[
		\Tr\left(c_0^{p^\ell}f(x)f(y)+c_1^{p^\ell}\left(f(x)^{q^{s}}f(y)+f(y)^{q^{s}}f(x)\right)\right)\in \cT_{n,t,\eta},
		\]
		i.e.,
		\[
		\Tr\left(y\hat{f}\left(h^{p^\ell} (f(x))\right)\right)\in \cT_{n,t,\eta},
		\]
		for any $c_0,c_1\in \fqn$, where $h^{p^\ell}(x) =c_0^{p^\ell}x+c_1^{p^\ell}x^{q^{s}}+(c_1^{p^\ell}x)^{q^{n-s}}$.
		
		We only have to consider whether $\hat{f}\left(h^{p^\ell} (f(x))\right)$ equals some $g(x)$ which is defined in \eqref{eq:def_g}.
		
		Set $c_1=0$ in $h$. By computation, the coefficient of $x$ in  $\hat{f}\left(h^{p^\ell} (f(x))\right)$ is
		\[\sum_{i=0}^{n-1}(c_0^{p^\ell})^{q^{n-i}}a_i^{2q^{n-i}}.\]
		Since the coefficient of the term of $q$-degree $0$ in $g$ is zero, we obtain
		$$a_0^2x + \sum_{i=1}^{n-1}a_i^{2q^{n-i}}x^{q^{n-i}} = 0,$$
		for any $x\in\fqn$. By the one-to-one correspondence between $q$-polynomials of $q$-degree less than $n$ and $\F_q$-linear transformation over $\fqn$, we get
		\begin{equation}\label{eq_2_3}
			a_i=0, \text{ for each }i=0,1,\cdots,n-1.
		\end{equation}
		Hence $f$ is the null polynomial which contradicts the assumption on $f$.
		
		Therefore, $\cS_{n,n-2,s}$ is not equivalent to $\cT_{n,t,\eta}$.
	\end{proof}

		\begin{remark}
			According to \eqref{eq:dual}, the dual of \(\mathcal{T}_{n,s,\eta}\) in Theorem \ref{th:main} is
			\begin{align*}
				\mathcal{T}_{n,s,\eta}^\perp& = 
				\left\{ \mathrm{Tr}\left(a_0xy+\sum_{i=1}^{k-2}a_i\left(x^{q^{si}}y+y^{q^{si}}x\right)\right): 
				a_0,\dots,a_{k-2}\in\mathbb{F}_{q^n},\;  \right.\\ 	&\left.	\eta a_{k-2}+(\eta a_{k-2})^{q^k}=0 \right\}.
			\end{align*}				
			It is easy to see that there are exactly $q^{n(n-3)/2}$ elements in the above set.  No matter whether $d$ is odd or even, the size of the set cannot meet the upper bound in Theorem \ref{the1.2}.
			
			Furthermore, we observe that $\mathcal{T}_{n,t,\eta}$ cannot be obtained as the dual of any known maximum $2$-code.
	\end{remark}
	
	Next we look at the equivalence between different members of the family stated in Theorem \ref{th:main}. In fact, as Theorem \ref{th:ineq}, we can completely determine their equivalence not only for $n=6,8,10$ but also for any even $n\geq 6$.
	\begin{theorem}\label{th:equi_members}
		For any positive integer $k>2$, let $n=2k$. Let $p$ be an odd prime and $m\in \mathbb{Z}^+$. Set $q=p^m$. For any non-squares $\eta_1,\eta_2\in \F_q$ and any integers $s_1,s_2$ satisfying $0<s_1,s_2<2k$ and $\gcd(s_1,n)=\gcd(s_2,n)=1$, $\cT_{n,s_1, \eta_1}$ and $\cT_{n,s_2, \eta_2}$ are equivalent if and only if
		one of the following collections of conditions is satisfied:
		\begin{itemize}
			\item[(a)]
			$s_1\equiv s_2\mod n$, and there are $a\in\fqn$, $i\in\{0,1,\cdots,n-1\}$ and $r \in \{0,\dots,m-1\}$ such that   $\eta_2^{q^{s_1i}}=a^{1+q^{s_1(k-2)}}\eta_1^{p^r}$;
			\item[(b)] 
			$s_1\equiv- s_2\pmod{n}$, and there are $a\in\fqn$, $i\in\{0,1,\cdots,n-1\}$ and $r \in \{0,\dots,m-1\}$ such that  $\eta_2^{q^{s_1i}}=a^{1+q^{s_1(k+2)}}\eta_1^{p^r q^{s_1(k+2)}}$.
		\end{itemize} 
	\end{theorem}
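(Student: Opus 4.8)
The plan is to reduce the equivalence to a condition on a single linearized polynomial and then to force that polynomial to be a monomial, proceeding exactly as in the proof of Theorem \ref{th:ineq}. Since both codes are $\F_q$-linear we may take $a=1$ and $\sS_0=0$ in \eqref{eq:phi}, so $\cT_{n,s_1,\eta_1}\cong\cT_{n,s_2,\eta_2}$ if and only if there exist a permutation $q$-polynomial $f=\sum_{i=0}^{n-1}a_iX^{q^i}$ and an integer $\ell\in\{0,\dots,m-1\}$ such that
\[
\hat f\!\left(g^{p^\ell}(f(x))\right)\in\cT_{n,s_2,\eta_2}
\]
for every $b_0,b_2\in\fqk$ and $b_1\in\fqn$, where $g$ is the generic element \eqref{eq:def_g} of $\cT_{n,s_1,\eta_1}$. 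The first step records this translation together with two elementary facts: $\hat f\circ g^{p^\ell}\circ f$ is again self-adjoint (because $g^{p^\ell}$ is), and, by \eqref{eq:def_g}, the admissible $q$-degrees of any member of $\cT_{n,s_i,\eta_i}$ form the five-element set $\{\,k,\ k\pm s_i,\ k\pm 2s_i\,\}\pmod n$.

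The heart of the proof is to show that $f$ must be a monomial. I would expand $\hat f\circ g^{p^\ell}\circ f=\sum_{d}C_dX^{q^d}$, where each $C_d$ is quadratic in the $a_i$ and $p^\ell$-semilinear in $(b_0,b_1,b_2)$, and impose $C_d=0$ for every degree $d$ outside $\{k,k\pm s_2,k\pm2s_2\}$. Substituting the three slots separately --- first $b_1=b_2=0$, then the $b_1$-slot, then the $b_2$-slot --- and using that $b_0,b_2$ range over $\fqk$ while $b_1$ ranges over $\fqn$, the $\F_q$-independence of the functions $x\mapsto x^{q^l}$ for $0\le l<k$ on $\fqk$ splits each vanishing condition into separate quadratic relations among the $a_i$. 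The central slot alone already pairs the products $a_ua_v$ whose indices satisfy $v-u\equiv k-d\pmod n$. The main obstacle is to deduce from the whole system that exactly one $a_i$ is nonzero. I expect to argue by contradiction: assuming the support $J=\{i:a_i\neq0\}$ has at least two elements, the cross-products coming from a pair of indices realizing an extreme lag in $J$ cannot be cancelled, and therefore force a nonzero $C_d$ at a forbidden degree, contradicting the vanishing conditions. This is the technical core and, unlike the rank computation of Section \ref{sec:proof_main}, it should run uniformly in $n$.

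Once $f=aX^{q^j}$ is known to be a monomial, the remainder is a direct computation. One checks that $\hat f\circ g^{p^\ell}\circ f$ carries each term $c_eX^{q^e}$ of $g$ to $a^{q^{-j}(1+q^e)}(c_e^{p^\ell})^{q^{-j}}X^{q^e}$, so the support is preserved; hence $\{k,k\pm s_1,k\pm2s_1\}=\{k,k\pm s_2,k\pm2s_2\}\pmod n$. Because $\gcd(s_1,n)=\gcd(s_2,n)=1$ and $k>2$, the $\pm s$ pair cannot be interchanged with the $\pm2s$ pair, which forces $s_1\equiv s_2$ or $s_1\equiv-s_2\pmod n$, splitting the statement into cases (a) and (b). The central coefficient lands in $\fqk$ automatically since $a^{1+q^k}$ is a norm into $\fqk$, and the $b_1$-slot imposes no extra condition because its image already sweeps all of $\fqn$ while its adjoint partner is fixed by self-adjointness. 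Thus the only genuine constraint comes from the $\eta$-slot at degrees $k\pm2s$; writing $j=s_1i$ (legitimate since $s_1$ is invertible mod $n$) and $p^\ell=p^r$, extracting this constraint yields precisely $\eta_2^{q^{s_1i}}=a^{1+q^{s_1(k-2)}}\eta_1^{p^r}$ in case (a), and the corresponding relation in case (b).

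For the converse I would reverse this computation: given $a\in\fqn$, $i$ and $r$ satisfying the relation in (a) (respectively (b)), set $f=aX^{q^{s_1i}}$ and $\ell=r$, and verify directly that $\hat f\circ g^{p^r}\circ f$ runs over all of $\cT_{n,s_2,\eta_2}$ as $(b_0,b_1,b_2)$ vary, the prescribed $\eta$-relation being exactly what places the degree-$(k\pm2s_2)$ coefficient into the $\eta_2\fqk$-slot. This exhibits an explicit equivalence and establishes sufficiency.
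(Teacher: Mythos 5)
Your outer scaffolding coincides with the paper's: reduce equivalence to $\hat f\circ g_1^{\sigma}\circ f\equiv g_2 \pmod{X^{q^n}-X}$ for a permutation $q$-polynomial $f=\sum a_iX^{q^i}$, split the vanishing of the forbidden coefficients into the three slots $b_0,b_1,b_2$, prove that $f$ must be a monomial, match supports to force $s_1\equiv\pm s_2\pmod n$, and read off conditions (a) and (b); sufficiency by direct verification is also how the paper concludes. Your endgame is sound: support preservation under a monomial $f$, the parity argument ($s_1,s_2$ odd, $n$ even) excluding $s_2\equiv\pm 2s_1$, the norm observation $a^{1+q^{k}}\in\fqk$ for the $b_0$-slot, and the extraction of the $\eta$-relations all check out.

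The genuine gap is at the step you yourself flag as the technical core: proving that the support $J=\{i:a_i\neq 0\}$ is a singleton. ``I expect to argue by contradiction\dots the cross-products coming from a pair of indices realizing an extreme lag in $J$ cannot be cancelled'' is a hope, not an argument, and as an idea it is shaky: the indices live in $\Z/n\Z$, so an ``extreme lag'' is not intrinsically defined, and cancellation between different lags is precisely what must be excluded --- a cross-product at lag $t$ paired with the $g_1$-term of degree $q^{s_1(k+\delta)}$ lands at the same $q$-degree $q^{s_1(k+\delta+t)}$ as a cross-product at lag $t+\delta-\delta'$ paired with the term of degree $q^{s_1(k+\delta')}$, where $\delta,\delta'$ range over $\{0,\pm1,\pm2\}$. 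The paper's resolution is concrete and of a different nature: (i) specialize the forbidden degree $j=0$ in each slot, using Lemma \ref{le:k,a_i} (an $\fqn$-linear independence statement for the maps $x\mapsto x^{q^{s_1l}}$, $0\le l\le k-1$, on $\fqk$, via Theorem \ref{th:Gow}; note this is stronger than the ``$\F_q$-independence'' you invoke, since the coefficients of the relations lie in $\fqn$), which yields $a_ia_{i+k}=0$ and $a_ia_{i+k\pm1}=0$ for all $i$, and then $a_{i_0+k+r}=0$ for $r\in\{0,\pm1,\pm2\}$ as in \eqref{eq:a_1}; (ii) feed these back into the conditions for general forbidden $j$ to propagate the vanishing to $a_{j+i_0+k+r}=0$ as in \eqref{eq:a_five=0}; (iii) a short congruence count shows that unless $e\equiv\pm1\pmod n$ (where $s_1\equiv s_2e$), the vanishing indices include $i_0$ itself, forcing $a_{i_0}=0$, a contradiction, while for $e\equiv\pm1$ they cover everything except $i_0$, which is exactly monomiality. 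Note in particular that monomiality and $s_1\equiv\pm s_2$ emerge from one and the same propagation; your plan to establish monomiality first, ``uniformly in $n$,'' and only afterwards compare supports cannot be carried out independently of $e$, because the set of forbidden degrees to be annihilated depends on $e$ from the outset. Until an argument along these lines is supplied, the central claim of your proof remains unproven.
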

	\begin{proof}
		Assume that $s_1\equiv s_2 e\pmod{n}$ for integer $e$ satisfying $1\leq e<n$. As $n$ is even, $e$ must be an odd integer and $ek\equiv k \pmod{n}$.
		
		Assume that $\cT_{n,s_1, \eta_1}$ and $\cT_{n,s_2, \eta_2}$ are equivalent. 	As in the proof of Theorem \ref{th:ineq}, for $i=1,2$, any element $T(x,y) \in \cT_{n,s_i,\eta_i}$ can be written as
		$$T(x,y)=\Tr(yg_i(x)),$$
		where 
		\[g_i(x):=b_0x^{q^{s_i k}}+b_1x^{q^{s_i (k-1)}}+(b_1x)^{q^{s_i (k+1)}}+\eta_i b_2x^{q^{s_i (k-2)}}+(\eta_i  b_2x)^{q^{s_i (k+2)}}.\]
		
		Thus there exist permutation $q$-polynomial $f=\sum_{i=0}^{n-1}a_iX^{q^{s_1i}}\in\fqn[X]$ and $\sigma$ defined by $\sigma(x)=x^{p^r}$ with $r\in \{0,\dots,m-1\}$ such that	
		\[
		\Tr\left(y\hat{f}\left(g_1^{\sigma} (f(x))\right)\right)\in \cT_{n,s_2, \eta_2}
		\]
		for any $b_0,b_2\in \F_{q^k}$ and $b_1\in \F_{q^n}$, where 
		\[
		g_1^{\sigma}(x)=b_0^\sigma x^{q^{s_1 k}}+b_1^\sigma x^{q^{s_1 (k-1)}}+(b_1^\sigma x)^{q^{s_1 (k+1)}}+\eta_1^\sigma b_2^\sigma x^{q^{s_1 (k-2)}}+(\eta_1  b_2^\sigma x)^{q^{s_1 (k+2)}}.
		\]
		
		As $b_0^\sigma, b_2^\sigma\in \F_{q^k}$ and $b_1^\sigma\in \F_{q^n}$, we may still write $b_i^\sigma$ as $b_i$ in the equation above by abuse of notation.
		
		It follows that 
		\begin{equation}\label{eq:f(g1)f=g_2}
			\hat{f}\circ \tilde{g}_1 \circ f \equiv  g_2 \pmod{X^{q^n}-X}
		\end{equation}
		for some $g_2=b_0'X^{q^{s_2 k}}+b_1'X^{q^{s_2 (k-1)}}+(b_1'X)^{q^{s_2 (k+1)}}+\eta_2 b_2'X^{q^{s_2 (k-2)}}+(\eta_2  b_2'X)^{q^{s_2 (k+2)}}$ where
		\[\tilde{g}_1=b_0X^{q^{s_1k}}+b_1X^{q^{s_1(k-1)}}+(b_1X)^{q^{s_1(k+1)}}+\eta_1^\sigma b_2X^{q^{s_1(k-2)}}+(\eta_1^\sigma b_2X)^{q^{s_1(k+2)}}.\]
		
		By expanding the left-hand side of \eqref{eq:f(g1)f=g_2}, we get
		\begin{equation}\label{equi:0}
			\footnotesize
			\sum_{j = 0}^{n - 1} \left( \sum_{i = 0}^{n - 1} \left( b_0^{q^{s_1(k - i)}} a_i^{q^{s_1(n - i)}} a_{j + i+k}^{q^{s_1(k - i)}}  + \sum_{r = 1}^{2} \left( c_r^{q^{s_1(k - i +r)}} a_i^{q^{s_1(n - i)}} a_{j + i + k-r}^{q^{s_1(k - i+r)}} + c_r^{q^{s_1(n - i)}} a_i^{q^{s_1(n - i)}} a_{j + i - k+r}^{q^{s_1(k-r - i)}} \right) \right)\right) X^{q^{s_1j}},
		\end{equation}
		where we set $c_1=b_1$ and $c_2=\eta_1^\sigma b_2$ for convenience. Note that the subscripts of $a_i$'s are computed modulo $n$.
		
		By comparing the coefficients of the term $X^{q^{s_1j}}$ on both side of \eqref{eq:f(g1)f=g_2} we obtain
		\begin{equation}\label{eq:equivalence_cT_1}
			\sum_{i = 0}^{n - 1} \left( b_0^{q^{s_1(k - i)}} a_i^{q^{s_1(n - i)}} a_{j + i+k}^{q^{s_1(k - i)}}  + \sum_{r = 1}^{2} \left( c_r^{q^{s_1(k - i +r)}} a_i^{q^{s_1(n - i)}} a_{j + i + k-r}^{q^{s_1(k - i+r)}} + c_r^{q^{s_1(n - i)}} a_i^{q^{s_1(n - i)}} a_{j + i - k+r}^{q^{s_1(k-r - i)}} \right) \right)=0,
		\end{equation}
		for each $j\in\{e\ell: 0\leq \ell < k-2\text{ or  }k+2< \ell <n\}$ and $b_0,b_2\in\mathbb{F}_{q^{k}},b_1\in\mathbb{F}_{q^{2k}}$.
		
		Since $f$ is a permutation $q$-polynomial, there must be at least one nonzero coefficient $a_{i_0}$. The most important part of the rest of this proof is to show the following result:
		
		\textbf{Claim.}
		Integer $e$ satisfies $e \equiv \pm 1\pmod{n}$, and polynomial $f=a_{i_0}X^{q^{i_0}}$.

		Our strategy to prove \textbf{Claim} is to derive several necessary conditions on  $a_i$'s by choosing different value of $b_i$'s in \eqref{eq:equivalence_cT_1}.
		
		\textbf{Step.\ 1}  We take $b_0\neq 0$ and $b_1=b_2 = 0$ in \eqref{eq:equivalence_cT_1} which implies
		\begin{equation*}
			\sum_{i=0}^{k-1}b_0^{q^{s_1(k - i)}}(a_i^{q^{s_1(n - i)}} a_{j + i+k}^{q^{s_1(k - i)}} +a_{i+k}^{q^{s_1(k - i)}} a_{j + i}^{q^{s_1(n - i)}})=0.
		\end{equation*}
		As the equation above holds for any $b_0\in\fqk$,  by Lemma \ref{le:k,a_i}, we obtain
		\begin{equation}\label{equi:2_1}
			a_i^{q^{s_1(n - i)}} a_{j + i+k}^{q^{s_1(k - i)}} +a_{i+k}^{q^{s_1(k - i)}} a_{j + i}^{q^{s_1(n - i)}}=0,
		\end{equation}
		for $i\in\{0,1,\cdots, k-1\}$.
		By taking $j= 0$ in the equation above, we get
		\begin{equation}\label{equi:2_2}
			a_ia_{i+k}=0,
		\end{equation}
		for $i\in\{0,1,\cdots, k-1\}$.
		
		\textbf{Step.\ 2}  By taking $b_1\neq 0$ and $b_0=b_2 = 0$ from \eqref{eq:equivalence_cT_1}, we obtain
		\begin{equation*}
			\sum_{i=0}^{n-1}b_1^{q^{s_1(n - i)}}(a_{i-k+1}^{q^{s_1(k-i-1)}} a_{j + i}^{q^{s_1(n - i)}} +a_i^{q^{s_1(n - i)}} a_{j + i - k+1}^{q^{s_1(k-1 - i)}})=0.
		\end{equation*}
		As the  equation above holds for any $b_1\in\fqn$,  it implies
		\begin{equation}\label{equi:3_1}
			a_{i-k+1}^{q^{s_1(k-i-1)}} a_{j + i}^{q^{s_1(n - i)}} +a_i^{q^{s_1(n - i)}} a_{j -k+ i+1}^{q^{s_1(k-1 - i)}}=0.
		\end{equation}
		By taking $j= 0$ in the equation above, we have 
		\begin{equation}\label{equi:3_2}
			a_{i-k+1} a_{i}=0,
		\end{equation}
		for $i\in\{0,1,\cdots, n-1\}$.
		
		\textbf{Step.\ 3} By taking $b_1=b_0=0$ and $b_2\neq 0$ in \eqref{eq:equivalence_cT_1}, we have
		\begin{align*}
			&\sum_{i=0}^{n-1}(\eta_1 b_2)^{q^{s_1(n - i)}}(a_{i-k+2}^{q^{s_1(k-i-2)}} a_{j + i}^{q^{s_1(n - i)}} +a_i^{q^{s_1(n - i)}} a_{j + i - k+2}^{q^{s_1(k-2 - i)}})\\
			=&\sum_{i=0}^{n-1}b_2^{q^{s_1(n - i)}}(a_{i-k+2}^{q^{s_1(k-i-2)}} (\eta_1a_{j + i})^{q^{s_1(n - i)}} +(\eta_1a_i)^{q^{s_1(n - i)}} a_{j + i - k+2}^{q^{s_1(k-2 - i)}})\\
			=&\sum_{i=0}^{k-1}b_2^{q^{s_1(k - i)}}(a_{i-k+2}^{q^{s_1(k-i-2)}} (\eta_1a_{j + i})^{q^{s_1(n - i)}} +(\eta_1a_i)^{q^{s_1(n - i)}} a_{j + i - k+2}^{q^{s_1(k-2 - i)}})\\
			&+\sum_{i=0}^{k-1}b_2^{q^{s_1(k - i)}}(a_{i+2}^{q^{s_1(n-i-2)}} (\eta_1a_{j + i+k})^{q^{s_1(k - i)}} +(\eta_1a_{i+k})^{q^{s_1(k - i)}} a_{j + i+2}^{q^{s_1(n-2 - i)}})\\
			=&0.
		\end{align*}
		As the equation  above holds for any $b_2\in\fqk$,  by Lemma \ref{le:k,a_i}, we obtain
		\begin{align}\label{equi:4_1}
			\notag&	a_{i-k+2}^{q^{s_1(k-i-2)}} (\eta_1a_{j + i})^{q^{s_1(n - i)}} +(\eta_1a_i)^{q^{s_1(n - i)}} a_{j + i - k+2}^{q^{s_1(k-2 - i)}}\\&+a_{i+2}^{q^{s_1(n-i-2)}} (\eta_1a_{j + i+k})^{q^{s_1(k - i)}} +(\eta_1a_{i+k})^{q^{s_1(k - i)}} a_{j + i+2}^{q^{s_1(n-2 - i)}}=0,
		\end{align}
		for $i\in\{0,1,\cdots, k-1\}$.
		By letting $j=0$ in the equation above, we get 
		\begin{align}\label{equi:4_2}
			\notag&	a_{i-k+2}^{q^{s_1(k-i-2)}} (\eta_1a_{ i})^{q^{s_1(n - i)}} +(\eta_1a_i)^{q^{s_1(n - i)}} a_{ i - k+2}^{q^{s_1(k-2 - i)}}\\
			\notag&+a_{i+2}^{q^{s_1(n-i-2)}} (\eta_1a_{ i+k})^{q^{s_1(k - i)}} +(\eta_1a_{i+k})^{q^{s_1(k - i)}} a_{i+2}^{q^{s_1(n-2 - i)}}\\
			&=2\left(	a_{i-k+2}^{q^{s_1(k-i-2)}} (\eta_1a_{ i})^{q^{s_1(n - i)}}+(\eta_1a_{i+k})^{q^{s_1(k - i)}} a_{i+2}^{q^{s_1(n-2 - i)}}\right)=0
		\end{align}
		for $i\in\{0,1,\cdots, k-1\}$.
		
		\textbf{Step.\ 4}	
		By $a_{i_0}\neq0$ and \eqref{equi:2_2}, we get
		$a_{i_0+k}=0$. Together with \eqref{equi:3_2} we obtain $a_{i_0+k+1}=a_{i_0+k-1}=0$. From $a_{i_0+k}=0$ and \eqref{equi:4_2}, we further derive $a_{i_0+k+2}=a_{i_0+k-2}=0$. To summarize, we have proved that
		\begin{equation}\label{eq:a_1}
			a_{i_0+k}=a_{i_0+k+1}=a_{i_0+k-1}=a_{i_0+k+2}=a_{i_0+k-2}=0.
		\end{equation}
		By letting $i = i_0+k$ in \eqref{equi:2_1} and $i = i_0+k-1$, $i=i_0$ in \eqref{equi:3_1}, respectively,  thanks to \eqref{eq:a_1}, we get
		\begin{equation*}
			a_{j+i_0+k}=0,~a_{j +i_0+k-1}=0,  \text{ and } a_{j +i_0+k+1}=0
		\end{equation*}
		for  $j\in\{e\ell: 0\leq \ell < k-2\text{ or  }k+2< \ell <n\}$.
		
		By letting $i = i_0+k-2$ in \eqref{equi:4_1}, thanks to \eqref{eq:a_1}, we get
		\[
		a_{i_0}^{q^{s_1(n-i_0)}} (\eta_1a_{j + i_0+k-2})^{q^{s_1(k-i_0+2)}} 
		+(\eta_1a_{i_0-2})^{q^{s_1(2-i_0)}} a_{j + i_0+k}^{q^{s_1(k - i_0)}}=0.
		\]
		As $a_{j+i_0+k}=0$ and $a_{i_0}\neq 0$, we have
		\begin{equation*}
			a_{j+i_0+k-2}=0
		\end{equation*}
		for  $j\in\{e\ell: 0\leq \ell < k-2\text{ or  }k+2< \ell <n\}$.
		
		Furthermore, we take $i=i_0$ in \eqref{equi:4_1}. Together with \eqref{eq:a_1} and $a_{j+i_0+k}=0$, we get
		\[
		a_{j+i_0+k+2}=0.
		\]
		Therefore, we have proved the following results:
		\begin{equation}\label{eq:a_five=0}
			a_{j+i_0+k+r}=0
		\end{equation}
		for  $j\in\{e\ell: 0\leq \ell < k-2\text{ or  }k+2< \ell <n\}$ and $r\in \{0,\pm 1, \pm 2\}$.
		
		\textbf{Step.\ 5} As $a_{i_0}\neq 0$, \eqref{eq:a_five=0} implies that
		\[\ell\equiv k-re^{-1}\pmod{n}\]
		has no solution for $\ell\in\{0\leq i < k-2\}\cup\{k+2< i <n\}$ and $r\in\{0,\pm 1,\pm 2\}$. 
		It is routine to verify that $e\equiv \pm 1 \pmod{n}$ which further leads to $a_j=0$ for $j\neq i_0$. Therefore we have proved \textbf{Claim}.
		
		Now we know that $f=a_{i_0}X^{q^{i_0}}$ and $e\equiv \pm 1\pmod{n}$ which means $s_1\equiv \pm s_2\pmod{n}$.
		Taking $a_j=0$ for $j\neq i_0$ in \eqref{equi:0}, 
		we get the coefficient of $X^{q^{s_1e{(k-2)}}}$
		\begin{align}\label{eq:eta_0}
			(\eta_1^\sigma b_2)^{q^{s_1(k - i_0 +2)}} a_{i_0}^{q^{s_1(n - i_0)}} a_{i_0 -2-2e}^{q^{s_1(k - i_0+2)}} + 	(\eta_1^\sigma b_2)^{q^{s_1(n - i_0)}} a_{i_0}^{q^{s_1(n - i_0)}} a_{  i_0 -2e+2}^{q^{s_1(k-2 - i_0)}}.
		\end{align}
		
		When $e=1$, \eqref{eq:eta_0} becomes \((\eta_1^\sigma b_2)^{q^{s_1(n - i_0)}} a_{i_0}^{q^{s_1(n - i_0)}+q^{s_1(k-2 - i_0)}}\). By \eqref{eq:f(g1)f=g_2}, we have 
		\[
		\eta_2^{q^{s_1i_0}} (b_2')^{q^{s_1 i_0}} = \eta_1^\sigma b_2 a_{i_0}^{1+q^{s_1(k-2)}}.
		\]
		Hence, there exist $a\in\fqn$ and $i\in\{0,1,\cdots,n-1\}$ such that  $\eta_2^{q^{s_1i}}=a^{1+q^{s_1(k-2)}}\eta_1^\sigma$.
		
		When $e=-1$, similar computation shows that  there are $a\in\fqn$ and $i\in\{0,1,\cdots,n-1\}$ such that  $\eta_2^{q^{s_1i}}=a^{1+q^{s_1(k+2)}}\eta_1^{\sigma q^{s_1(k+2)}}$.
		
		For the proof of necessity, it is straightforward to verify the equivalence between $\cT_{n,s_1, \eta_1}$ and $\cT_{n,s_2, \eta_2}$ under (a) or (b) and we omit the proof.
	\end{proof}
	
	\section{Conclusive remarks}\label{sec:conclu}
	In this paper, we have obtained a family of maximum $\fq$-linear $(n-2)$-codes in $\sS_n(q)$ for odd prime power $q$ and $n=6,8,10$, and we can show that it is not equivalent to any previously known examples. It seems that the construction should work for any even $n\geq 6$. However, as our proof heavily relies on the choices and the computation of $(n-2)\times (n-2)$ submatrices of the associated Dickson matrix, the difficulty grows rapidly with increasing $n$. Therefore, we expect that a totally different proof of Theorem \ref{th:main}, which does not use Dickson matrices, should exist and can be generalized to the proof for any even $n\geq 6$.

		Finally, we discuss the relationship of our codes with \emph{perfect symmetric rank-metric codes} and \emph{complete symmetric rank-distance (CSRD) codes}. According to Theorem~3.5 in \cite{Mushrraf2024Perfect}, a subset \(C\) of the symmetric matrix space is a perfect code if and only if \(C\) is the whole space, or its minimum distance \(d(C)=3\), the dimension \(m\) is odd, and \(C\) is a symmetric MRD code. Our construction \(\mathcal{T}_{n,s,\eta}\) has minimum distance \(n-2\) (with \(n=6,8,10\)). Consequently, \(\mathcal{T}_{n,s,\eta}\) is not a perfect code.
		
		On the other hand, our construction \(\mathcal{T}_{n,s,\eta}\), as maximum symmetric rank-distance (MSRD) codes are necessarily  complete symmetric rank-distance (CSRD) codes. This follows directly from the definition of completeness: a code is called complete if no further codeword can be added without decreasing the minimum distance. The notion of CSRD codes has been thoroughly investigated in \cite{Alnajjarine2025Linear}, which contains a complete classification of linear CSRD codes in \(M_{3\times 3}(\mathbb{F}_{q})\).
	
	\section*{Acknowledgment}
	The authors would like to thank the two anonymous reviewers for their careful reading and helpful suggestions, which significantly improved the quality of this paper.
	This work is supported by the National Natural Science Foundation of China (No.\ 12371337) and the Natural Science Foundation of Hunan Province (No.\ 2023RC1003).
	\bibliographystyle{abbrv}
	\bibliography{references_srd} 
\end{document}